 \def\fixspacing{%
	\oddsidemargin=0.25truein
	\evensidemargin=\oddsidemargin
	\textwidth=6.0truein
	\textheight=8.5truein
	\topmargin=-0.25truein
 }%
\long\def\proclaim#1#2\endproclaim{%
	\medbreak\noindent{\bf #1.\enspace}{\sl #2}\par\medbreak}
\long\def\mildproclaim#1:{\noindent{\em #1:\/}}
\def\Cal#1{{\cal #1}}
\long\def\explain#1\endexplain{{\sl #1}}
\def\C{\Cal{C}}
\let\dC\C
\def\G{\Cal{G}}
\def\D{\Cal{D}}
\let\Ga\Gamma
\let\De\Delta
\def\F{{\Cal F}}
\let\Si\Sigma
\def\X{{\Cal X}}
\let\bdy\partial
\let\clo\overline
\let\ori\omega
\let\ab\allowbreak
\def\mR{{\Bbb R}}
\def\itj#1{{\em\frenchspacing #1\/}}
\def\bfj#1{{\bf\frenchspacing #1\/}}
\def\nos#1{N_{{#1}}}
\def\cover#1{\widetilde{#1}}
\theoremstyle{plain}
\newtheorem{theorem}{Theorem}[section]
\newtheorem{lemma}[theorem]{Lemma}
\newtheorem{corollary}[theorem]{Corollary}
\newtheorem{observation}[theorem]{Observation}
\newtheorem{conjecture}[theorem]{Conjecture}
\newtheorem{proposition}[theorem]{Proposition}
\newtheorem*{theoremMainOSE}{Theorem \ref{MainOSE}}
\theoremstyle{definition}
\newtheorem{operation}[theorem]{Operation}
\newcounter{clm}
\def\startclaim{\setcounter{clm}{0}}
\newenvironment{claim}%
  {\refstepcounter{clm}\smallbreak\noindent{\sl Claim \theclm.\ }}%
  {\smallbreak}%
\def\claimproofblock{\vrule depth0pt height7pt width0.5pt%
   \kern-0.1pt%
   \vrule depth-6.5pt height7pt width4.0pt%
   \kern-4.0pt%
   \vrule depth0pt height0.5pt width4.0pt%
   \kern-0.1pt%
   \vrule depth0pt height7pt width0.5pt}
\newenvironment{claimproof}%
  {\smallbreak\noindent{\em Proof of Claim.\ }\ignorespaces}%
  {\kern0.5em\claimproofblock\smallbreak}%
 \title{Orientable embeddings and orientable cycle double covers of
	projective-planar graphs%
	\thanks{The United States Government is authorized to reproduce
	and distribute reprints notwithstanding any copyright notation
	herein.}%
}
 \author{%
	M. N. Ellingham%
	\thanks{Supported by U.S. National Security Agency grants
		H98230-06-1-0051 and H98230-09-1-0065.}\\%
	{\sl Department of Mathematics, 1326 Stevenson Center}\\%
	{\sl Vanderbilt University, Nashville, TN 37240, U. S. A.}\\%
	{\tt mark.ellingham@vanderbilt.edu}\\%
 \and%
	Xiaoya Zha%
	\thanks{Supported by U.S. National Security Agency grants
		H98230-06-1-0045 and H98230-09-1-0041.}\\%
	{\sl Department of Mathematical Sciences}\\%
	{\sl Middle Tennessee State University, Murfreesboro, TN 37132,
		U. S. A.}\\%
	{\tt xzha@mtsu.edu}\\%
 }
 \date{13 November 2009}
\begin{document}
 \maketitle
 \begin{abstract}
 In a {\em closed $2$-cell embedding\/} of a graph each face is
homeomorphic to an open disk and is bounded by a cycle in the graph. 
The Orientable Strong Embedding Conjecture says that every $2$-connected
graph has a closed $2$-cell embedding in some orientable surface.  This
implies both the Cycle Double Cover Conjecture and the Strong Embedding
Conjecture.  
 In this paper we prove that every $2$-connected projective-planar cubic
graph has a closed $2$-cell embedding in some orientable surface.  The
three main ingredients of the proof are (1) a surgical method to convert
nonorientable embeddings into orientable embeddings; (2) a reduction for
$4$-cycles for orientable closed $2$-cell embeddings, or orientable
cycle double covers, of cubic graphs; and (3) a structural result for
projective-planar embeddings of cubic graphs.
 We deduce that every $2$-edge-connected projective-planar graph (not
necessarily cubic) has an orientable cycle double cover.
 \end{abstract}

 \section{Introduction}\label{Intro}

 In this paper all graphs are finite and may have multiple edges but no
loops.  A graph is {\em simple\/} if it has no multiple edges.
 A {\em pseudograph\/} may have multiple edges and loops.
 By a {\em surface\/} we mean a connected compact $2$-manifold without
boundary.  The nonorientable surface of genus $k$ is denoted $\nos{k}$.
By an {\em open\/} or {\em closed disk\/} in a surface we mean a subset
of the surface homeomorphic to such a subset of $\mR^2$.

 A {\em closed 2-cell embedding\/} of a graph is an embedding such that
every face is an open disk bounded by a cycle (no repeated vertices) in
the graph.  A graph must be $2$-connected to have a closed $2$-cell
embedding.
 The {\em Strong Embedding Conjecture\/} due to Haggard \cite{Ha} says
that every $2$-connected graph has a closed $2$-cell embedding in some
surface. The even stronger {\em Orientable Strong Embedding
Conjecture\/} \cite{Ja} says that every $2$-connected graph has a closed
$2$-cell embedding in some orientable surface.
 The facial walks of every closed $2$-cell embedding form a {\em cycle
double cover\/}, a set of cycles in the graph such that each edge is
contained in exactly two of these cycles.  Therefore, both embedding
conjectures imply the well-known {\em Cycle Double Cover Conjecture\/}, 
which says that every $2$-edge-connected  graph has a cycle double
cover, 


 Every spherical embedding of a $2$-connected planar graph is an
orientable closed $2$-cell embedding.
 For cubic graphs (but not in general) we can go from a cycle double
cover back to a closed $2$-cell embedding; thus,
cubic graphs with cycle double covers (see for example
\cite{AlGoZh, GoDiss, HaggMark}) have closed $2$-cell embeddings.
 Some special classes of graphs are known to have minimum genus
embeddings with all faces bounded by cycles; these are closed $2$-cell
embeddings.  For example, the complete graph $K_n$ has embeddings with
all faces bounded by $3$-cycles in a nonorientable surface if $n
\equiv 0$, $1$, $3$ or $4$ mod $6$, and in an orientable surface if
$n \equiv 0$, $3$, $4$ or $7$ mod $12$ (see \cite{RiMCT}).
 The complete bipartite graph $K_{m,n}$ has embeddings with all faces
bounded by $4$-cycles in a nonorientable surface if $mn$ is even, and in
an orientable surface if $(m-2)(n-2)$ is divisible by $4$ \cite{Ri65n,
Ri65o}.
 However, in general not many graphs are known to have closed $2$-cell
embeddings.

 Even though the Orientable Strong Embedding Conjecture is very strong,
the study of orientable closed $2$-cell embeddings seems to be
promising.  One approach is to try to prove the following:

 \begin{conjecture}[Robertson and Zha {\rm [personal communication]}]\label{NSEtoOSE}
 If a $2$-connected graph has a nonorientable closed $2$-cell embedding 
then it has an orientable closed $2$-cell embedding.
 \end{conjecture}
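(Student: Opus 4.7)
The plan is to proceed by induction on the nonorientable genus of a closed 2-cell embedding. Fix a nonorientable closed 2-cell embedding $\Pi$ of $G$ in $\nos{k}$, with $k \geq 1$; the goal is to produce a closed 2-cell embedding of $G$ in a surface whose nonorientable genus is strictly less, or in an orientable surface directly. Iterating such a reduction terminates in an orientable closed 2-cell embedding.

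The main tool would be a local surgery at a crosscap. I would seek a one-sided simple closed curve $C \subset \nos{k}$ that crosses only two edges of $G$ and otherwise lies inside a single face of $\Pi$. Intuitively $C$ encircles a crosscap of $\Pi$, and such a curve should exist because $\Pi$ is closed 2-cell and $\nos{k}$ has nontrivial $\Bbb Z/2$ fundamental group. Cutting along $C$ converts a M\"obius-band neighbourhood of $C$ into an annulus; capping the resulting boundary circle by a disk yields $\nos{k-1}$ (or the sphere if $k=1$, in which case orientability is achieved in one step). Under this surgery the two edges of $G$ crossed by $C$ are rerouted across the new disk, the face of $\Pi$ containing $C$ is remodelled into one or two new faces, and all other faces are undisturbed. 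The candidate new embedding $\Pi'$ of $G$ then sits in a surface of smaller nonorientable genus.

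The principal obstacle is verifying that $\Pi'$ is still closed 2-cell, that is, each new face is bounded by a cycle of $G$ with no repeated vertices. Bad configurations arise when the two edges crossed by $C$ are incident with a common vertex, or when the rerouting fuses two pieces of a facial boundary into a closed walk that revisits a vertex. Controlling this requires a careful choice of $C$, and probably also the use of 2-connectivity of $G$ together with a minimum-counterexample assumption (for instance, choosing $\Pi$ to minimize some combinatorial invariant so that any obstruction can be simplified to produce a smaller counterexample).

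In the cubic projective-planar case of the present paper, ingredients (2) and (3) of the abstract, namely the $4$-cycle reduction for cubic graphs and the structural description of projective-planar embeddings of cubic graphs, furnish exactly this control: the structure theorem isolates a canonical crosscap around which the surgery can be applied, and the $4$-cycle reduction handles the short cycles most likely to obstruct the surgery. A much more intricate version of this toolkit would presumably be needed in the fully general setting, and this is where one should expect a proof of the full conjecture to be most delicate, since in higher nonorientable genus one must control not a single crosscap but the global $\Bbb Z/2$-homology of orientation-reversing curves, and in the non-cubic setting vertices of degree $\geq 4$ provide many more ways for a surgered face boundary to acquire a repeated vertex.
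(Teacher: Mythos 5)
There is a genuine gap --- in fact, the statement you are trying to prove is stated in the paper as an open conjecture (Conjecture \ref{NSEtoOSE}), and the paper proves only the special case of cubic projective-planar graphs (Theorem \ref{MainOSE}); so what is being compared here is your plan against a problem the paper itself does not solve. Your plan, as written, fails at its first step: a one-sided simple closed curve $C$ that crosses only two edges of $G$ and otherwise lies in a single face need not exist. A closed $2$-cell embedding is only guaranteed to be $2$-representative; the embedding may well be $3$-representative or more, in which case no noncontractible curve meets $G$ in just two points, and even when a curve meeting $G$ twice exists it is typically not confined to one face (it passes through two faces) and, for $k\ge 2$, the shortest noncontractible curves may all be two-sided, so cutting along them does not remove a crosscap at all. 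The second and decisive gap is the one you flag but do not resolve: the cut-and-cap surgery generally destroys the closed $2$-cell property. The paper is explicit that the surgeries of Fiedler--Huneke--Richter--Robertson and Robertson--Thomas, which are essentially the one-crosscap-at-a-time operation you describe, produce orientable embeddings that ``are not necessarily closed $2$-cell embeddings,'' and there is a concrete parity obstruction: in the paper's Theorem \ref{old4.2} the construction works only when the noncontractible face ring has \emph{odd} length, and fails for even length because the new face boundary becomes a single self-intersecting walk rather than two cycles. Your sketch never engages with this parity issue, and appealing to ``a careful choice of $C$'' plus a minimum-counterexample assumption is not an argument.

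It is also worth noting that the paper's method in the cubic projective-planar case is structurally the opposite of your genus-reduction induction. Rather than peeling off crosscaps one at a time, the paper first \emph{adds} crosscaps (Operation \ref{inscc}, inserted along a noncontractible elementary face ring of odd length, whose existence is the content of the structural Theorem \ref{Structure} combined with the $4$-cycle reduction of Lemma \ref{no4cycle}), precisely so that all facial boundary component walks become cycles; only then does it cut along a single curve through all the crosscaps (Operation \ref{OriSet2}) and cap, landing on an orientable surface in one step with all boundary walks still cycles. The intermediate surfaces have \emph{larger} nonorientable genus than the starting one, so no induction on nonorientable genus is taking place. Your intuition that the structure theorem and the $4$-cycle reduction ``control'' the surgery is right in spirit, but the control they provide is the existence of an odd face ring, not the isolation of a single crosscap; without that odd-parity input, and without a mechanism that keeps every new face boundary a cycle after cutting, the proposal does not constitute a proof of the conjecture, nor even an alternative proof of the paper's special case.
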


 \noindent
 While this conjecture appears weaker than the Orientable Strong
Embedding Conjecture, it is actually equivalent to it, via a result in
\cite{Z} based on techniques of Little and Ringeisen \cite{LR}.  This
result says that if a graph $G$ has an orientable closed $2$-cell
embedding, and $e$ is a new edge, then $G + e$  has a closed $2$-cell
embedding in {\em some\/} surface, which may or may not be orientable.

 This paper is a first step towards verifying Conjecture \ref{NSEtoOSE}.
Our goal is to develop techniques to turn nonorientable closed $2$-cell
embeddings into orientable closed $2$-cell embeddings.  Using techniques
of this kind, we show that the Orientable Strong Embedding Conjecture is
true for projective-planar cubic graphs. The following is the main
result of this paper.
 
 \begin{theorem}\label{MainOSE}
 \global\def\MainOSEtext{%
 Every $2$-connected projective-planar cubic graph has a closed
$2$-cell embedding in some orientable surface.
 }\MainOSEtext
 \end{theorem}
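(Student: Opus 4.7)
The plan is to combine the three ingredients announced in the introduction, proceeding by induction on $|V(G)|$. If $G$ is planar, its spherical embedding is already a closed $2$-cell embedding in an orientable surface, so I may assume $G$ admits a projective-planar embedding $\Pi$ in $\nos{1}$ that is not planar. Fix such a $\Pi$; its combinatorial data will drive the rest of the argument.

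The inductive step is furnished by ingredient~(2), the $4$-cycle reduction. If $G$ contains a $4$-cycle $C$ satisfying the technical hypotheses of that reduction, I replace $C$ by a smaller configuration to obtain a $2$-connected projective-planar cubic graph $G'$ with strictly fewer vertices. By the induction hypothesis, $G'$ has an orientable closed $2$-cell embedding, and by the way the reduction is set up this lifts back to one for $G$.

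If no $4$-cycle in $G$ is reducible, I invoke ingredient~(3), the structural theorem for projective-planar embeddings of cubic graphs. This should pin $\Pi$ down to a short list of canonical configurations---most plausibly variants of a M\"obius-ladder-like backbone running through the crosscap with controlled planar patches attached. For each canonical configuration I then apply ingredient~(1), the surgical method: cut along a one-sided simple closed curve following a suitable non-contractible cycle of $G$ in $\Pi$, double the resulting boundary, and re-glue so as to replace the crosscap by an orientable patch (e.g.\ a handle). The combinatorial effect on face walks can be tracked case by case, and the output is an embedding in an orientable surface in which, by construction, every face is bounded by a cycle of $G$.

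The hard part is the surgery in the irreducible case: one must certify that after the cut-and-paste every face walk is an \emph{honest} cycle (no repeated vertices) and that no two faces share the same boundary. This is precisely why a strong structural theorem is needed---without detailed control over how non-contractible cycles of $G$ interact with the crosscap, there is no way to predict which face walks degenerate. Thus the principal technical burden is twofold: proving a structural classification tight enough to reduce surgery to finitely many explicit cases, and showing that the $4$-cycle reduction together with this classification exhausts all $2$-connected projective-planar cubic graphs.
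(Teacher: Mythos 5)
Your proposal restates the roadmap from the introduction, but at the two places where the real mathematical content must appear it either guesses wrongly or leaves the work undone. First, the structural ingredient is not a classification of irreducible embeddings into finitely many canonical configurations (no such finite list is possible here: the graphs involved are arbitrarily large). What is actually proved (Theorem \ref{Structure}) is a dichotomy: a cyclically-$4$-edge-connected cubic graph with a $2$-representative projective-planar embedding has either a $4$-cycle face or a noncontractible \emph{elementary face ring of odd length}. That theorem is established by a parity analysis of face chains and face rings (resting on Proposition \ref{old5.4} and Corollary \ref{Z1}), not by enumerating M\"obius-ladder-like backbones, and your plan contains no substitute for it. Relatedly, the $4$-cycle reduction is not conditional on ``technical hypotheses'' about a particular $4$-cycle: after the standard reductions force a minimum counterexample to be cyclically $4$-edge-connected (Lemma \ref{minproperties}, Corollary \ref{minproperties2}), \emph{every} $4$-cycle can be reduced, and making this work while preserving orientability is itself new and needs the case analysis of Lemma \ref{no4cycle}, including the case where a handle must be added; your induction step treats this as a black box.

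Second, the surgery you propose---cut along a one-sided simple closed curve following a noncontractible cycle of $G$, double the boundary, and reglue to trade the crosscap for a handle---is not the surgery used, and as stated it does not obviously yield an embedding of $G$ at all, since cutting along a cycle of the graph duplicates its vertices and edges; nor does it explain how the closed $2$-cell property is preserved. The paper's surgery (Theorem \ref{old4.2}) is driven by the odd face ring supplied by the dichotomy: insert a crosscap between each pair of consecutive faces of the ring (Operation \ref{inscc}), then cut along a simple closed curve $H$ \emph{disjoint from $G$} that passes once through each face of the ring and each crosscap, orientability of the result being certified by the $2$-face-colorability argument of Lemma \ref{OriSet1}/Operation \ref{OriSet2}. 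The oddness of the ring is precisely what guarantees that the merged face has two boundary components, each a cycle, so all facial boundary components remain cycles and discarding non-disk faces and capping gives the desired orientable closed $2$-cell embedding. Since you explicitly defer exactly these points (``the principal technical burden is twofold''), the proposal leaves the essential content of the proof open rather than providing it.
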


 A standard construction then provides a result for general
$2$-edge-connected projective-planar graphs.
 Given such an embedded graph $G$, we may construct a $2$-edge-connected
(hence $2$-connected) cubic projective-planar graph $H$ by expanding
each vertex $v$ of degree $d_v \ge 4$ to a contractible cycle of length
$d_v$.  By Theorem \ref{MainOSE}, $H$ has an orientable closed $2$-cell
embedding, whose oriented faces give a compatibly oriented (each edge
occurs once in each direction) cycle double cover $\dC$ of $H$. 
Contracting the new cycles to recover $G$, $\dC$ becomes a compatibly
oriented cycle double cover $\dC'$ of $G$.  ($\dC'$ may not correspond to
a surface embedding of $G$, but that does not matter.)
 Some cycles of $\dC$ may become oriented eulerian subgraphs, rather than
cycles, of $\dC'$, but those can always be decomposed into oriented
cycles, preserving the compatible orientation.
  This proves the following.

 \begin{corollary}\label{MainOCDC}
 Every $2$-edge-connected projective-planar graph has an orientable
cycle double cover.
 \end{corollary}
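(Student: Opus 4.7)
My plan is to derive Corollary \ref{MainOCDC} from Theorem \ref{MainOSE} by a standard vertex-expansion reduction. Given a $2$-edge-connected graph $G$ embedded in the projective plane, I would replace each vertex $v$ of degree $d_v \ge 4$ by a small cycle $Z_v$ of length $d_v$, drawn inside a tiny disk about $v$ so that the $d_v$ edges formerly incident to $v$ are distributed one per vertex of $Z_v$. The resulting graph $H$ is cubic, projective-planar (each expansion is local and contractible), and $2$-edge-connected; since $H$ is cubic, a cut vertex would force a bridge, so $H$ is in fact $2$-connected.

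Applying Theorem \ref{MainOSE}, $H$ admits a closed $2$-cell embedding in some orientable surface. Fixing a global orientation of that surface converts each facial cycle into an oriented cycle; because each edge of $H$ bounds exactly two faces, the collection $\dC$ of oriented facial boundaries forms a compatibly oriented cycle double cover of $H$, i.e.\ each edge of $H$ is traversed exactly once in each direction.

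To descend to $G$, I would contract each $Z_v$ back to its original vertex. Contraction deletes no edges, so every edge of $G$ (identified with the unique edge of $H$ not lying on any $Z_v$) is still covered exactly twice, with its two traversals in opposite directions. A cycle of $\dC$ that visits some $Z_v$ more than once projects not to a cycle but to a connected oriented eulerian subgraph of $G$; any such subgraph decomposes into edge-disjoint oriented cycles by iteratively peeling off closed directed trails, preserving the compatible orientation. The union of these oriented cycles, taken over all elements of $\dC$, is the desired orientable cycle double cover $\dC'$ of $G$.

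The main nontrivial content sits entirely inside Theorem \ref{MainOSE}; the only point deserving care in the reduction is verifying that the projection together with the subsequent eulerian-to-cycle decomposition preserves both the double-covering and the compatible-orientation properties, and this is immediate from the fact that contraction identifies vertices without altering the multiplicity or orientation of any surviving edge.
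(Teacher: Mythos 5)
Your proposal is correct and follows essentially the same route as the paper: expand each vertex of degree at least four into a contractible cycle to obtain a $2$-connected cubic projective-planar graph, apply Theorem \ref{MainOSE} to get a compatibly oriented cycle double cover, then contract the new cycles and decompose any resulting oriented eulerian subgraphs into oriented cycles. This matches the paper's argument in substance and in detail.
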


 In Section \ref{DefNot} we introduce some notation. In
Section \ref{nonori2ori} we develop surgeries that convert nonorientable
surfaces into orientable surfaces, and use them to prove a special case
of our main result.  In Section \ref{Redn} we describe some reductions
for our problem.  In Section \ref{Main} we arrive at our main result by
proving a structural result which shows that either the special
case from Section \ref{nonori2ori} occurs, or some kind of
reduction applies.

\section{Definitions and notation}\label{DefNot}

 Let $\Psi$ denote an embedding of a graph $G$ in a surface $\Si$.
 We usually identify the graph and the point-set of its image under the
embedding.
 If $S \subseteq \Si$, then $\clo S$ denotes the closure of $S$ in
$\Si$.
 A {\em face\/} is a component of $\Si - G$.  The boundary of the face
$f$ is denoted by $\bdy f$.  Each component of $\bdy f$ is traced out by
a closed walk in $G$, which we call a {\em facial boundary component
walk\/} of $f$.  A {\em $k$-cycle face\/} is a face with exactly one
boundary component, which is a $k$-cycle.

 An embedding in which every face is an open disk ($2$-cell) is an {\em
open $2$-cell embedding\/}; then each face has a single boundary
component walk, called the {\em facial walk\/}.
 If every facial walk of an open $2$-cell embedding is in fact a
cycle, we have a {\em closed $2$-cell embedding\/}.  
 If $\Si$ is not the sphere, then the {\em representativity\/} of any
embedding $\Psi$ is defined to be
 $\rho (\Psi ) = \min \{|\Ga \cap G |:
	\Ga$ is a noncontractible simple closed	curve in $\Si \}$.
 We say $\Psi$ is {\em $k$-representative\/} if $\rho(\Psi) \ge k$.
 Robertson and Vitray \cite{RV} showed that $\Psi$ is open $2$-cell
exactly when $G$ is connected and $\Psi$ is $1$-representative, and
closed $2$-cell exactly when $G$ is $2$-connected and $\Psi$ is
$2$-representative.

 Suppose $C$ is a cycle with a given orientation, and $u$ and $v$ are
two vertices on  $C$. Denote by $u C v$ the path on $C$ from $u$ to $v$
in the given direction.
 If we have a graph embedded on an orientable surface, then all cycles
may be oriented in a consistent clockwise direction, and we assume this
orientation unless otherwise specified.
 If $P$ is a path, $uPv$ is defined to be the subpath from $u$ to $v$
along $P$.

 We say two sets (usually faces) $f$ and $g$ {\em touch\/} if $\bdy f
\cap \bdy g \ne \emptyset$; we say they {\em touch $k$ times\/} if $\bdy
f \cap \bdy g$ has $k$ components.
 A sequence of sets $f_0, f_1, f_2, \ldots, f_n$ is an {\em $f_0
f_n$-face chain of length $n$\/} if for $1 \le i \le n-1$ each $f_i$ is
a distinct face of $\Psi$ and for $0 \le i < j \le n$, $f_i$ and $f_j$
touch when $j = i+1$ and do not touch otherwise.
 The sets $f_0$ and $f_n$ may be faces of $\Psi$, but need not be; in
this paper they are usually paths.
 If $R \subseteq \Si$ and $f_i \subseteq R$ for $1 \le i \le n-1$, we
say the face chain {\em goes through $R$\/}.

 A cyclic sequence of distinct faces $(f_1, f_2, \ldots, f_n)$
is called a {\em face ring of length $n$\/} if (i) $n = 2$, and $f_1$
and $f_2$ touch at least twice, or (ii) $n \ge 3$ and the sets $\bdy f_i
\cap \bdy f_j$, $i \ne j$, are pairwise disjoint, nonempty when $j =
i-1$ or $i+1$, and empty otherwise (subscripts interpreted modulo $n$). 
 We will use face rings only in closed $2$-cell embeddings, so we do not
need to consider situations in which a face `touches itself', i.e.,
face rings of length $1$.
 A face ring is {\em elementary\/} if (i) $n=2$ and the two faces touch
exactly twice, or (ii) $n \ge 3$ and any two faces touch at most
once.
 A face ring is {\em noncontractible\/} if $R = \bigcup_{i=0}^{n-1}
\clo{f_i}$ contains a noncontractible simple closed curve.

 The following observations will be useful for showing that face
rings are elementary.

 \begin{observation}\label{3conn}
 If $\Psi$ is an embedding of a $3$-connected graph, and two faces are
contained in some open disk, then the faces touch at most once.
 \end{observation}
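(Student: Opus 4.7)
The plan is to argue by contradiction: assume the two faces $f$ and $g$ touch at least twice, and extract a $2$-vertex separator of $G$ from this touching, contradicting $3$-connectedness.

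Concretely, let $D$ be the open disk containing $f$ and $g$, so $\bar f,\bar g\subseteq\bar D$. Since $\bar f\cap\bar g$ lies in $G$, each of its components is a nonempty connected subgraph, so I can pick vertices $u$ and $v$ in two distinct components $A_1,A_2$ of $\bar f\cap\bar g$. The claim will be that $\{u,v\}$ separates $G$, which contradicts $3$-connectivity. The point is to realize this separator topologically by a simple closed curve in $\bar D$ that meets $G$ only at $u$ and $v$.

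To build such a curve, I would choose an arc $\alpha$ from $u$ to $v$ contained in $f\cup\{u,v\}$ (possible because $\bar f$ is path-connected in $\bar D$ and $f$ is an open neighborhood of the interior of any such path) and an arc $\beta$ from $v$ to $u$ contained in $g\cup\{u,v\}$. Because $f$ and $g$ are disjoint open faces, $\alpha\cap\beta=\{u,v\}$, so $\gamma:=\alpha\cup\beta$ is a simple closed curve in $\bar D$ meeting $G$ exactly at $u$ and $v$. Since $\gamma\subset\bar D$, it bounds a closed disk $D_1\subseteq\bar D$ in $\Sigma$ on one side, with the complement $\Sigma\setminus\bar D_1$ on the other. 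Every component of $G-\{u,v\}$ misses $\gamma$ and therefore lies entirely in $\mathrm{int}(D_1)$ or entirely in $\Sigma\setminus\bar D_1$.

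To conclude that both sides contain some of $G-\{u,v\}$, I would look near the arcs $\alpha$ and $\beta$. The arc $\alpha$ sits in the closed face $\bar f$ joining the two distinct touching components $A_1$ and $A_2$, so on each side of $\alpha$ in $\bar f$ there must be a piece of $\partial f$ avoiding $\{u,v\}$; these two pieces therefore lie on opposite sides of $\gamma$. Each contains at least one vertex of $G-\{u,v\}$ (here one needs a brief check that $\partial f$ is not degenerate between $u$ and $v$, e.g.\ a parallel $uv$-pair, which would contradict $3$-connectivity anyway via the cut $\{u,v\}$). Thus $G-\{u,v\}$ has points on both sides of $\gamma$, so $\{u,v\}$ is a $2$-cut of $G$, the desired contradiction.

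The main obstacle is this last bookkeeping step: verifying that $\gamma$ genuinely separates vertices of $G-\{u,v\}$ on each side, in full generality (possibly non-disk faces or repeated vertices along the facial boundary walks). The cleanest way is to reduce directly to the structure of $\bar f\cap\bar g$ having at least two components, and argue that if one of the two sides of $\gamma$ contained no vertex of $G-\{u,v\}$, then the two components $A_1,A_2$ could in fact be joined within $\bar f\cap\bar g$ on that side, contradicting their being distinct components; this is the step that needs the most care.
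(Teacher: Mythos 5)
Your overall strategy---join $u$ and $v$ by an arc through $f$ and an arc through $g$, obtaining a simple closed curve $\gamma$ meeting $G$ only at $u$ and $v$, and then play separation against $3$-connectivity---is exactly the intended (and standard) argument; the paper records this statement as an observation without proof. The genuine gap is the sentence ``Since $\gamma\subset\overline D$, it bounds a closed disk $D_1\subseteq \overline D$.'' The closure of an open disk in a surface need not be a closed disk (an open disk may wrap around the surface, or even be dense in it), and a simple closed curve lying in $\overline D$ need not be contractible; the trouble is precisely that $u$ and $v$ need only lie in $\overline D$, not in $D$. Moreover, under the hypothesis read literally (only the open faces lie in an open disk) this step is not merely unjustified but can fail: on the torus one can place two open quadrilateral faces, bounded by cycles $(u\,p_1\,v\,p_2)$ and $(v\,q_1\,u\,q_2)$, end to end around an essential annulus, so that $\partial f\cap\partial g=\{u\}\cup\{v\}$ and both open faces lie in an open coordinate strip (an open disk) that stops just short of $u$, while the complementary annulus is filled with a cylindrical grid making the whole graph simple and $3$-connected; there the two faces touch twice and your $\gamma$ is essential and nonseparating, so no $2$-cut appears. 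In other words, contractibility of $\gamma$ is the whole content of the hypothesis, and it must come from the stronger containment that actually holds wherever the observation is applied in the paper (for instance, in the proof of Theorem~\ref{Structure} the two faces lie inside a closed disk bounded by a cycle of $G$, so $\gamma$ lies in that closed disk and therefore bounds); it cannot be deduced from ``the two open faces lie in some open disk'' alone.

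The remainder of your outline is fine once $\gamma$ is known to bound a disk $D_1$. Your concluding reduction is the right way to handle the bookkeeping: if one side of $\gamma$ contained no vertex of $G-\{u,v\}$, then that side meets $G$ at most in edges joining $u$ to $v$; the part of that side adjacent to $\alpha$ lies in $f$ and the part adjacent to $\beta$ lies in $g$, so either that side misses $G$ entirely, forcing $f=g$, or some $uv$-edge lies on $\partial f\cap\partial g$ and joins $u$ to $v$ there, contradicting that $u$ and $v$ lie in distinct components of $\partial f\cap\partial g$. (One small slip: a parallel $uv$-pair does not by itself contradict $3$-connectivity---$K_4$ with one edge doubled is $3$-connected---so the contradiction in the degenerate case really must be drawn from the component structure of $\partial f\cap\partial g$, not from a $2$-cut.) With the disk hypothesis interpreted so that the closures of the two faces lie in a disk, your argument goes through; as written, the key topological step is the missing piece.
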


 \begin{observation}\label{3conn3rep}
 If $\Psi$ is a $3$-representative embedding of a $3$-connected graph
then any two faces touch at most once.
 \end{observation}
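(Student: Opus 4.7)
The plan is a contradiction argument. Suppose distinct faces $f$ and $g$ of $\Psi$ satisfy that $\bdy f \cap \bdy g$ has at least two components $A_1$ and $A_2$. For each $i$ I pick $p_i \in A_i$, choosing $p_i$ to be the midpoint of an edge in $A_i$ if $A_i$ contains an edge, and otherwise the single vertex of $A_i$. Since $\Psi$ is $3$-representative and $G$ is $3$-connected (hence $2$-connected), the Robertson--Vitray characterization recalled above shows that $\Psi$ is closed $2$-cell, so $f$ and $g$ are open disks. I then draw an arc $\alpha \subseteq \clo f$ from $p_1$ to $p_2$ whose relative interior lies in $f$, and similarly an arc $\beta \subseteq \clo g$ from $p_2$ back to $p_1$ through $g$. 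The union $\Ga = \alpha \cup \beta$ is a simple closed curve in $\Si$ with $|\Ga \cap G| \le 2$.

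If $\Ga$ is noncontractible, the definition of $\rho(\Psi)$ forces $|\Ga \cap G| \ge 3$, directly contradicting $|\Ga \cap G| \le 2$. So $\Ga$ must bound a closed disk $\clo D$ in $\Si$, and $\alpha, \beta$ split each of $f$ and $g$ into two subdisks, one inside $D$ and one outside. The key local verification is that both $D$ and $\Si \setminus \clo D$ contain vertices of $G$ other than $p_1, p_2$. This follows because $A_1$ and $A_2$ are distinct components of $\bdy f \cap \bdy g$: each of the two arcs into which $\{p_1, p_2\}$ splits the cycle $\bdy f$ must traverse an edge not lying in $\bdy g$ (and, when $p_i$ is an edge midpoint, the relevant half-edge terminates at a genuine vertex of $G$), so each arc has an interior vertex, and these vertices lie on opposite sides of $\Ga$.

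Once this is in hand, $\Ga \cap G$ separates $G$ into two nonempty parts. If both $p_i$ are vertices, $\{p_1, p_2\}$ is a $2$-vertex-cut; if both are edge midpoints, the two pierced edges form a $2$-edge-cut; in the mixed case where $p_1$ is a vertex and $p_2$ is the midpoint of an edge $e_2$, the endpoints of $e_2$ are distinct from $p_1$ (otherwise $p_1 \in A_1 \cap A_2$, contradicting $A_1 \ne A_2$), so $\{p_1, u\}$ for an endpoint $u$ of $e_2$ is a $2$-vertex-cut. Any of these contradicts $G$ being $3$-connected, using $\kappa(G) \le \lambda(G)$ to handle the edge-cut case. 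The construction of $\Ga$ and the appeal to representativity are the easy parts; the main obstacle is this final case analysis, especially confirming that the distinctness of the two components $A_1, A_2$ really forces vertices of $G$ strictly to both sides of $\Ga$.
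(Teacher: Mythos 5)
The paper states this as an Observation without proof, so there is no in-paper argument to compare against; judged on its own, your curve-and-cut strategy is the natural one and most of it is sound, but the step you yourself single out as the crux --- that both sides of $\Ga$ contain vertices of $G$ other than $p_1,p_2$ --- is not actually established in all cases. Your stated reason is that each of the two arcs into which $p_1,p_2$ split the cycle $\bdy f$ traverses an edge not in $\bdy g$ and hence has an interior vertex. That inference fails when $A_1=\{p_1\}$ and $A_2=\{p_2\}$ are single vertices and $p_1p_2$ happens to be an edge of $\bdy f$ (necessarily not in $\bdy g$, which is perfectly consistent with $A_1\ne A_2$): then one arc of $\bdy f$ is that single edge and has no interior vertex at all, since the endpoints of the edge are $p_1$ and $p_2$ themselves. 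You cannot exclude this configuration a priori --- it is exactly one of the situations the observation is meant to rule out --- so in this case you have produced no vertex on one side of $\Ga$ and the $2$-cut contradiction does not follow as written. A second, related soft spot is your mixed case: the only interior vertex your argument supplies inside $D$ may be the endpoint $u$ of $e_2$ lying in $D$, which is precisely the vertex you then place in the cut $\{p_1,u\}$; you have not shown that $G-\{p_1,u\}$ retains a vertex inside $D$ to be separated from the outside.

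Both gaps are patchable by a short extra argument, which should be included. For the first: if the side of $\Ga$ containing the edge $e=p_1p_2$ contained no vertex besides $p_1,p_2$, consider the face $h$ on the other side of $e$ from $f$; since the facial walk of $f$ is a cycle it uses $e$ only once, so $h\ne f$, and $h\ne g$ because $e\notin\bdy g$; hence $h$ is disjoint from $\Ga$, lies entirely in that side, and its boundary cycle would have to be contained in the closed edge $e$, which is impossible. For the mixed case, if $u$ were the only vertex strictly inside $D$, then every edge at $u$ other than $e_2$ would have to join $u$ to $p_1$, forcing either a parallel edge or degree at most $2$ at $u$, contradicting $3$-connectivity. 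With these cases supplied, your proof is correct; without them it is incomplete exactly at its key step.
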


 \section{Converting nonorientable surfaces to orientable surfaces}%
	\label{nonori2ori}

 It is not hard to turn an orientable embedding into a nonorientable 
embedding, by adding a crosscap in an arbitrary location.
 On the other hand, it is not easy in general to construct an orientable
embedding from an existing nonorientable embedding of a graph.
 The authors of \cite{FHRR,RT} developed some surgeries which
turn embeddings on the projective plane and on the Klein bottle into
orientable embeddings. However, the resulting orientable embeddings are
not necessarily closed $2$-cell embeddings.  In this section we develop
some surgeries to convert nonorientable embeddings into 
orientable embeddings.  We show that under certain conditions this can
be applied to convert a closed $2$-cell projective-planar embedding into
a closed $2$-cell orientable embedding.

 Our overall strategy will be as follows.  Suppose $G$ is embedded in a
nonorientable surface $\Si_1$, obtainable by inserting a set $\X$ of
crosscaps in an orientable surface $\Si_0$.  We insert an additional set
$\X'$ of crosscaps to get an embedding of $G$ on a nonorientable surface
$\Si_2$, in which all facial boundary component walks are cycles.  Then
we embed in $\Si_2$ a pseudograph $H$ disjoint from $G$, such that
cutting along $H$ destroys all the crosscaps in $\X \cup \X'$.  Capping
any holes due to cutting we obtain an embedding of $G$ on an orientable
surface $\Si_3$, in which all facial boundary component walks are still
cycles.  Removing any faces that are not open disks, and capping again,
we finish with a closed $2$-cell embedding of $G$ in an orientable
surface $\Si_4$.

 We first define some concepts related to crosscaps, next discuss
insertion of crosscaps, then examine cutting to remove nonorientability,
and finally apply these ideas to certain projective-planar embeddings.
 \smallskip

 We regard a {\em crosscap\/} in a $2$-manifold (with or without
boundary) as just a one-sided simple closed curve in the interior of the
manifold (rather than the usual definition, where a crosscap is a
neighborhood of such a curve, homeomorphic to a M\"obius strip).  To
{\em add a crosscap\/} to a $2$-manifold we remove a point or a set
homeomorphic to a closed disk not intersecting the boundary, locally
close the result by adding a boundary component homeomorphic to a
circle, then identify antipodal points of this circle to obtain a
one-sided simple closed curve.  If we removed a point $p$ or closed disk
$\De$, we call this {\em inserting a crosscap at $p$ or $\De$\/}.  If
$p$ is an interior point of an edge $e$ of an embedded graph, we call
this {\em inserting a crosscap on $e$\/}.  In figures we represent a
crosscap by a circle (representing the added boundary component)
with an X inside it.  To recover the original $2$-manifold (up to
homeomorphism) we may {\em collapse\/} the crosscap by identifying all
its points into a single point (the result is homeomorphic to what we
obtain by cutting out a M\"{o}bius-strip neighborhood of the crosscap
and capping the resulting hole with a disk).
 \smallskip

 We define the following surgeries for inserting crosscaps.  These, or
closely related operations, have been used in earlier papers such as
\cite{Z}, and generalize ideas used by Haggard \cite{Ha}.

\begin{operation}[Inserting crosscaps between faces or along a face
ring]\label{inscc}
 \leavevmode
 \begin{itemize}
 \item[(a)]%
 Let $f_1$ and $f_2$ be two distinct face occurrences at a vertex $v$.
(They may be different occurrences of the same face.)  Then $f_1$ and
$f_2$ partition the edges incident with $v$ into two intervals, $I_1$
and $I_2$.
 Choose a closed disk $\De$ close to $v$ such that all edges of $I_1$
cross it once, and no edges of $I_2$ intersect it.  Add a crosscap at
$\De$, re-embedding the parts of the edges of $I_1$ between $\De$ and
$v$ so that their order around $v$ is reversed.
 We call this {\/\em inserting a crosscap between $f_1$ and
$f_2$ near $v$\/}.  It does not matter whether we insert the crosscap
across $I_1$ or $I_2$: using the other one just amounts to pulling $v$
through the crosscap, which does not change the embedding.  If one of
$I_1$ or $I_2$ is a single edge, then this is equivalent to inserting a
crosscap on that edge.

 \item[(b)]%
 If $f_1$ and $f_2$ are distinct faces and $\bdy f_1 \cap \bdy f_2$ is a
path $P$ (possibly a single vertex) then the effect of inserting a
crosscap between $f_1$ and $f_2$ at any vertex of $P$, or on any edge of
$P$, is the same, so we just talk about {\/\em inserting a crosscap
between $f_1$ and $f_2$\/}.

 \item[(c)]%
 Suppose $\F = (f_1, f_2, \ldots, f_m)$ is an elementary face ring
of length $m \ge 3$.  Interpreting subscripts modulo $m$, insert a
crosscap between $f_i$ and $f_{i+1}$ for $1 \le i \le m$.  We call this
{\/\em inserting crosscaps along $\F$\/}.  (This can also be defined for
elementary face rings of length $2$, or for suitable face chains,
although we do not need this here.)

 \end{itemize}
 \end{operation}

 \begin{figure}
 \centerline{\epsfbox{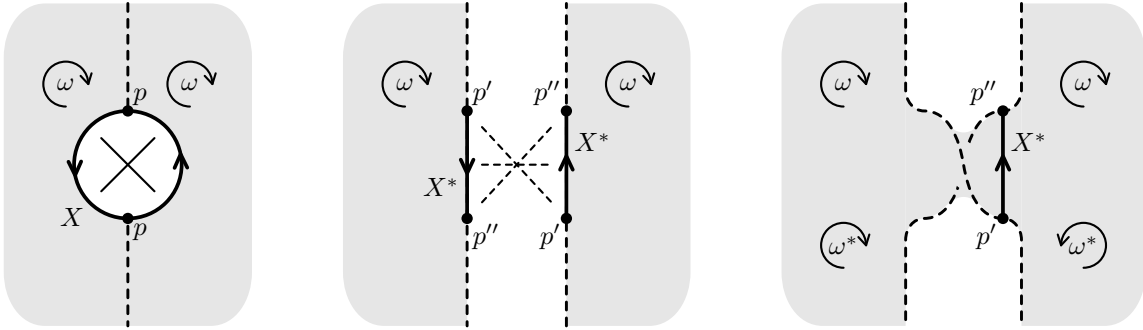}}
 \caption{Cutting a crosscap}\label{fig-inscc}
 \end{figure}

 While the above definition allows us some freedom in exactly how we
place the crosscaps, in practice it may be convenient to make a definite
choice about the location of the crosscaps, to help in tracing boundary
component walks in the new embedding.
 \smallskip

 Now we introduce our cutting operation.
 Our approach is fairly general, although we will need only a special case
of it for our projective-planar results.

 We first need to examine what happens if we cut through an individual
crosscap on a surface.  Suppose $\De$ is a neighborhood of a crosscap
$X$, which we think of as a circle $\cover{X}$ with antipodal points
identified.  There is a consistent local orientation $\ori$ on $\De -
X$.  Cut along a curve $\Ga$ which crosses $X$ at a single point $p$.
 If antipodal points of $\cover{X}$ were not identified, we would cut
$\De$ into two pieces $\Delta_1$ and $\Delta_2$.  Because we do identify
antipodal points, $\De_1$ and $\De_2$ remain joined along the cut
crosscap $X^*$, a line segment joining two copies of $p$.  $\De_1$ and
$\De_2$ are joined with a twist so we now have a local orientation
$\ori^*$ everywhere in the neighborhood of the cut crosscap $X^*$,
including on $X^*$ itself, which reverses relative to $\ori$ when we
cross $X^*$.  See Figure \ref{fig-inscc}: at left is a planned cut; in
the middle the cut crosscap is shown as two copies of the line segment
$X^*$ that are to be identified; and at right we see the result of the
identification.

  As mentioned earlier, we consider a nonorientable surface built up by
adding crosscaps to an orientable surface; we then cut through a
pseudograph $H$ intersecting those crosscaps to remove the
nonorientability.

 \begin{lemma}\label{OriSet1}
 Suppose $\Phi$ is a $2$-face-colorable embedding of a pseudograph $H$
on an orientable surface $\Si$.
 Suppose we choose a finite set of points $P$ so that each point of $P$
is an interior point of an edge of $H$.
 If we insert a crosscap $X_p$ at every $p \in P$, cut the resulting
surface along all edges of $H$, and use disks to cap the boundary
components of the resulting $2$-manifold with boundary, the result is a
finite union of pairwise disjoint orientable surfaces.
 \end{lemma}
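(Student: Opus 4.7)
The plan is to use the 2-face-coloring to assemble a consistent orientation on each connected component of the final capped surface. Fix a global orientation $\ori$ of $\Si$, inducing an orientation on every face of $\Phi$, and let $\ori'$ be the local orientation equal to $\ori$ on each black face of the 2-face-coloring and to $-\ori$ on each white face. I would then verify that $\ori'$ survives all the surgeries and extends to a global orientation on each piece of the final manifold.

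Next I would track the behavior of $\ori'$ along each cut edge of $H$. Cutting along an edge $e$ that carries no crosscap simply detaches the two sides of $e$ onto distinct boundary arcs, so no compatibility condition is required there. Cutting along an edge $e$ that carries a crosscap $X_p$ at an interior point $p$ is the delicate case: the crosscap $X_p$ has a M\"{o}bius-band neighborhood, and the edge-cut through $p$ converts that M\"{o}bius band into an orientable disk that joins the face-pieces on the two sides of $e$. By the cut-crosscap analysis surrounding Figure~\ref{fig-inscc}, this disk inherits the local orientation $\ori^*$, which equals $\ori$ on one side of the segment $X^*$ and $-\ori$ on the other.

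The 2-face-colorability hypothesis now does exactly the work needed: the two faces meeting across $e$ are distinct and oppositely colored, so their $\ori'$-orientations differ from $\ori$ by opposite signs, which is precisely the sign flip required to match $\ori^*$ on both sides of $X^*$. Consequently $\ori'$ extends continuously across every cut crosscap, and since local compatibility at every gluing locus is enough on a 2-manifold, $\ori'$ assembles to a global orientation on each connected component of the cut 2-manifold with boundary. Capping the boundary components with orientable disks extends the orientation in each component, so every component of the final surface is orientable, as required.

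The main technical point I expect to be the obstacle is the local topological analysis at a cut crosscap: verifying that cutting the M\"{o}bius-band neighborhood of $X_p$ along the arc of $e$ through $p$ yields an orientable disk whose inherited orientations from the two sides are related by a sign reversal across $X^*$. Once that is established, the 2-face-coloring is simply the algebraic bookkeeping device that lets the required per-face sign flips be chosen coherently; without it, an edge with the same face on both sides would produce an unresolvable local contradiction.
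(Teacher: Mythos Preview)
Your proposal is correct and follows essentially the same approach as the paper: define the candidate orientation to agree with the global orientation $\ori$ on one color class and with $-\ori$ on the other, then check that the only place two face-pieces remain joined after cutting is across a cut crosscap, where the built-in sign reversal matches the color change. The paper's proof is terser---it does not separately discuss edges without crosscaps, simply noting that any passage between faces in $\Si^*$ must go through a cut crosscap---but the content is the same.
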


 \begin{proof}
 It suffices to show that the $2$-manifold with boundary $\Si^*$
obtained by cutting along $H$ is orientable; then by capping we obtain a
union of orientable surfaces.

 Consider a fixed global orientation $\ori$ (local choice of clockwise
direction) for $\Si$.  When we add the crosscaps, $\ori$ gives a
consistent local orientation on a neighborhood of each crosscap $X_p$,
excluding $X_p$ itself.  Let $C_0$ and $C_1$ be the two color classes of
faces of $\Phi$.  Define an orientation $\ori^*$ on $\Si^*$ that is
equal to $\ori$ on each face in $C_0$ and opposite to $\ori$ on each
face in $C_1$.  When we pass between two faces of $\Phi$ in $\Si^*$, we
pass between a face in $C_0$ and a face in $C_1$ via a cut crosscap.  As
previously discussed, crossing the cut crosscap $X_p^*$ reverses the
orientation, in agreement with $\ori^*$.  Therefore $\ori^*$ provides a
consistent global orientation for $\Si^*$.
 \end{proof}

 In Lemma \ref{OriSet1}, $H$ need not be connected.  
 Also, we may allow $H$ to contain free loops, edges incident with no
vertex, which map to simple closed curves in an embedding; we can always
insert vertices to turn these into ordinary loops.

 Instead of starting with the pseudograph $H$ and inserting the
crosscaps, we may start with the crosscaps and add $H$.  Therefore, the
following is equivalent to Lemma \ref{OriSet1}.

 \begin{operation}\label{OriSet2}
 Suppose $\Si'$ is a nonorientable surface containing disjoint crosscaps
$X_1$, $X_2$, $\ldots$, $X_k$.  Suppose $\Phi'$ is an embedding on
$\Si'$ of a pseudograph $H$, such that each $X_i$ contains exactly one
point of $H$, an interior point of an edge that crosses $X_i$.  Suppose
that when we collapse every $X_i$, $1 \le i \le k$, we get a
$2$-face-colorable embedding of $H$ in an orientable surface $\Si$. 
Then if we cut $\Si'$ along all edges of $H$ and use disks to cap the
boundary components of the resulting $2$-manifold with boundary, the
result, $\Si''$, is a finite union of pairwise disjoint orientable
surfaces.
 \end{operation}

 In Operation \ref{OriSet2}, the condition that each crosscap $X_i$ be
crossed by exactly one edge of $H$, implying that the pseudograph after
collapsing every $X_i$ is still $H$, is important.
 If this is not the case, then the embedding obtained by cutting along
$H$ may not be orientable, even if collapsing the crosscaps yields a
$2$-face-colorable orientable embedding of some graph.  For example,
suppose we add four crosscaps $X_1, X_2, X_3, X_4$ to a sphere to get
$\Si'=\nos{4}$, and take $H$ to consist of two disjoint loops $\Ga_1$
and $\Ga_2$, where $\Ga_1$ and $\Ga_2$ both cross $X_1$ and $X_2$, $X_3$
is crossed only by $\Ga_1$ and $X_4$ is crossed only by $\Ga_2$.  When
we collapse the crosscaps we get a $2$-face-colorable embedding of a
graph on a sphere, but if we cut $\Si'$ along $\Ga_1$ and $\Ga_2$
the result is not orientable.

 With some extra conditions it is possible to allow more than one edge
of $H$ to cross a crosscap $X_i$, but we will not pursue the details here.
 \smallskip

 The pseudograph $H$ in Operation \ref{OriSet2} indicates where to cut. 
The graph we wish to embed is a different graph, $G$.  We assume that
$G$ also has an embedding $\Psi'$ in $\Si'$, disjoint from the embedding
$\Phi'$ of $H$.  Note that $G$ may cross some or all of the crosscaps
$X_i$ and still be disjoint from $H$; there may be several edges of $G$
crossing each crosscap, and an edge of $G$ may cross several crosscaps.
 In this context vertex-splitting in $H$ preserving
$2$-face-colorability allows us to assume, if we wish, that $H$ is a
union of vertex-disjoint cycles or free loops.

 If $G$ is connected, then when we cut along $H$ and cap, we get an
embedding $\Psi''$ of $G$ in an orientable surface, one of the connected
components of $\Si''$.
 We do not disturb the order of the edges around any of the vertices of
$G$, so $\Psi''$ has the same set of facial boundary component walks as
$\Psi'$, although the faces may not be open disks.
 \smallskip

 Now we combine Operations \ref{inscc} and \ref{OriSet2} to prove an
easy case of Theorem \ref{MainOSE}, which applies to graphs with
arbitrary vertex degrees, not just cubic graphs.

 \begin{theorem}\label{old4.2}
 Let $\Psi$ be a closed $2$-cell embedding of a $2$-connected graph $G$
in the projective plane.  Suppose $\Psi$ contains a noncontractible
elementary face ring $\F = (f_1, f_2, \ldots, f_l)$ which has odd
length $l \ge 3$.
 Then $G$ has a closed $2$-cell embedding in some orientable surface.
 \end{theorem}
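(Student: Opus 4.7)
The plan is to carry out the four-stage strategy outlined at the start of Section \ref{nonori2ori}. First I apply Operation \ref{inscc}(c) to the elementary face ring $\F$, inserting new crosscaps $X_1, \ldots, X_l$ (with $X_i$ placed between $f_i$ and $f_{i+1}$, indices taken mod $l$) to obtain an embedding of $G$ in $\Si_2 = \nos{l+1}$ in which all facial boundary component walks are still cycles of $G$. Since $\F$ is noncontractible, the face-ring region $R = \bigcup_i \clo{f_i}$ is topologically a M\"obius band, and I take the original projective-plane crosscap $X_0$ to be its one-sided core curve, so that the new crosscaps $X_1, \ldots, X_l$ all lie along $X_0$ at the interfaces $P_i = \clo{f_i} \cap \clo{f_{i+1}}$.

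The crucial step is to construct the cutting pseudograph $H$ as a single simple closed curve $\gamma$ in $\Si_2$, disjoint from $G$, that crosses each of the $l+1$ crosscaps $X_0, X_1, \ldots, X_l$ exactly once. I build $\gamma$ by choosing, in each face $f_i$, a path from near $X_{i-1}$ to near $X_i$ that lies in the interior of $f_i$ (hence is disjoint from $G$), and then gluing consecutive paths through the $X_i$'s; in exactly one of the $l$ paths the route is chosen so as to also cross $X_0$. Every crosscap crossing flips locally which side of $X_0$ the curve lies on, so the concatenation closes up consistently as a simple closed curve exactly when the total number of sign flips, namely $l+1$, is even---which is precisely the hypothesis that $l$ is odd. (In particular, for even $l$ one can verify that no valid simple closed curve crossing each crosscap exactly once exists in $R$.)

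Having produced $\gamma$, I verify the hypotheses of Operation \ref{OriSet2}: collapsing each of the $l+1$ crosscaps reduces $\Si_2 = \nos{l+1}$ to the sphere, on which $\gamma$ becomes a simple closed curve with $l+1$ marked vertices separating the sphere into two open disks---a $2$-face-colorable embedding of $\gamma$ on an orientable surface. Operation \ref{OriSet2} then gives, by cutting $\Si_2$ along $\gamma$ and capping the resulting boundary circles with disks, an orientable surface $\Si_3$ carrying an embedding $\Psi''$ of $G$. Because $\gamma$ is disjoint from $G$, the facial boundary component walks of $\Psi''$ are unchanged from those of the embedding in $\Si_2$ and hence are cycles of $G$. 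Finally, delete any face of $\Psi''$ that is not an open disk and cap each of its boundary cycles with a fresh disk; the resulting closed $2$-cell embedding of $G$ in an orientable $\Si_4$ is the desired conclusion. I expect the main obstacle to lie in the explicit realization of $\gamma$: arranging the face-interior arcs so that they concatenate into a single simple, non-self-intersecting closed curve transverse to each crosscap at one point and avoiding $G$, with the odd parity of $l$ being the key combinatorial ingredient that makes such an assembly possible.
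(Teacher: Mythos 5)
Your overall architecture (insert crosscaps along $\F$ via Operation \ref{inscc}(c), cut along a simple closed curve through all $l+1$ crosscaps via Operation \ref{OriSet2}, then discard non-disk faces and cap) is the same as the paper's, but you have misplaced the point where the hypothesis ``$l$ odd'' is actually needed, and this creates a genuine gap. Your first step asserts without proof that after inserting $X_1,\ldots,X_l$ along the ring ``all facial boundary component walks are still cycles of $G$.'' That assertion is precisely the nontrivial claim, and it is exactly where parity enters: the $l$ faces of $\F$ merge into a single face $g$, and one must check that $\bdy g$ consists of two components, each a cycle. A direct trace of the boundary walk around the M\"obius band shows this holds when $l$ is odd, but when $l$ is even $g$ has a single boundary walk that traverses each interface edge twice and repeats vertices; the paper proves the odd case using elementariness of $\F$ and remarks explicitly that the even case fails for this reason. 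Without this verification your final embedding is not known to be closed $2$-cell, since capping cannot repair a face whose boundary walk is not a cycle.

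Compounding this, the place where you do invoke oddness is spurious. There is no parity obstruction to the existence of the cutting curve: a simple closed curve disjoint from $G$, passing through each face of $\F$ once and crossing each of $X_0, X_1, \ldots, X_l$ exactly once, can be drawn for every $l \ge 2$ (route an arc through each face between consecutive crosscaps, closing up through $X_0$ inside the one face it meets). Crossing an odd number of crosscaps merely makes the curve one-sided; it does not prevent it from closing up or from being simple (the core of a M\"obius band is such a curve), and Operation \ref{OriSet2} only needs the collapsed curve to be a $2$-face-colorable embedding in an orientable surface, which any simple closed curve on the sphere is. So for even $l$ the cut-and-cap step still produces an orientable embedding of $G$; the construction fails only because the facial boundary walks are not cycles. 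Thus your parenthetical claim that no valid curve exists for even $l$ is false, and the parity hypothesis must instead be used in the facial-walk analysis you skipped. (A smaller issue: taking $X_0$ to be the core of $R$ makes $X_0$ cross $G$ at every interface and puts it through the locations of the new crosscaps; the paper instead shifts $X_0$ so it meets the closure of exactly one face of $\F$, which keeps the crosscaps disjoint and the curve-tracing clean.)
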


 \begin{proof}
 %
 Represent the projective plane in the standard way as a disk with
antipodal boundary points identified; the boundary represents a crosscap
added to the sphere, which we call the {\em outer crosscap\/} $X_0$.  By
homotopic shifting we may ensure that the closure of exactly one face of
$\F$ intersects $X_0$, and $X_0$ cuts that face into exactly two
nonempty pieces.  See Figure \ref{fig-ofr}.

 \begin{figure}
 \centerline{\epsfbox{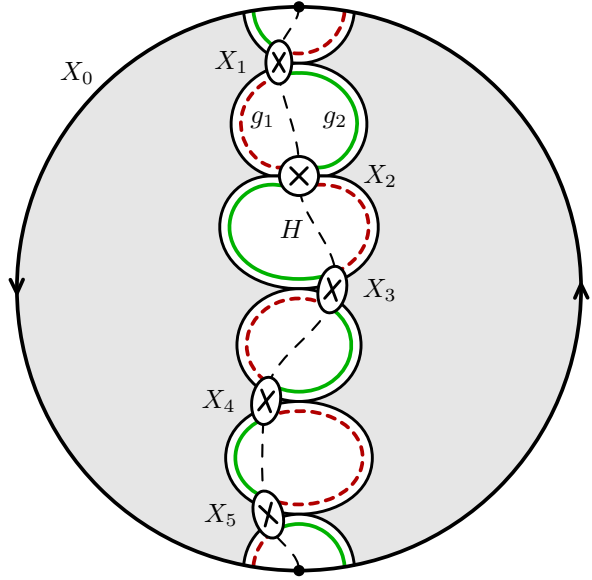}}
 \caption{Orienting using an elementary face ring of odd length}%
	\label{fig-ofr}
 \end{figure}

 Insert crosscaps $X_1$, $X_2$, $\ldots$, $X_l$ along $\F$ as in
Operation \ref{inscc}(c), to obtain a new embedding $\Psi'$ of $G$ in
$\nos{l+1}$.
 All faces of $\F$ turn into a new face $g$ in $\Psi'$ whose boundary
has two components $g_1$ and $g_2$.
 All other facial walks in $\Psi$ remain unchanged in $\Psi'$.  
 Because faces in $\F$ are disjoint except that consecutive faces
intersect in a single component, each of $g_1$ and $g_2$ is a cycle. 
Thus, all facial boundary component walks of $\Psi '$ are cycles. 

 By following $\F$, we may construct a simple closed curve $H$ that
passes through each face of $\F$ and each inserted crosscap exactly
once, and is disjoint from $G$.  $H$ crosses each of the $l+1$
crosscaps $X_0$, $X_1$, $\ldots$, $X_l$.
 If we consider $H$ as an embedded pseudograph with one vertex (any
point of $H - \bigcup_{i=0}^l X_i$) and one loop, then collapsing all
crosscaps leaves $H$ as a simple closed curve in the sphere, which is a
$2$-face-colorable embedding of $H$ in an orientable surface.
 Therefore, by Operation \ref{OriSet2}, cutting along $H$ and
capping yields an orientable surface, in which $G$ is embedded with all
facial boundary components cycles.  Removing any faces that are not open
disks and capping any resulting holes with disks, we obtain a closed
$2$-cell embedding of $G$ in an orientable surface, as required.
 \end{proof}

 Theorem \ref{old4.2} does not work with elementary face rings of even
length because then we get a face with a single boundary component that
self-intersects, instead of two components $g_1$ and $g_2$.

 In terms of our original strategy, here $\Si_0$ is the sphere, $\X =
\{X_0\}$, $\Si_1$ is the projective plane, $\X' = \{X_1, X_2, \ldots,
X_l\}$, $\Si_2$ is $\nos{l+1}$, and $\Si_3$ and $\Si_4$ are the
orientable surfaces in the last two sentences of the proof.

 The proof of Theorem \ref{old4.2} uses only a very simple version of
our strategy, and there is an alternative way to obtain the final
embedding in this proof, namely the surgery of Fiedler et al.
\cite[Lemma A]{FHRR}.
 However, we have examples of noncubic $2$-connected projective-planar
graphs where we can find orientable closed $2$-cell embeddings using
more complicated applications of Operations \ref{inscc} and
\ref{OriSet2}.
 Also, our approach continues the development of a coherent set of tools
for constructing closed $2$-cell embeddings, begun in papers such as
\cite{Z}.

 \section{Reductions}\label{Redn}

 In this section we describe reductions that allow us to restrict our
attention to a smaller class of projective-planar cubic
graphs.  Most of these reductions are standard, but the reduction we
give for $4$-cycles is new.  We state our results in a general setting
and then apply them to projective-planar cubic graphs.  We also prove a
technical result which will be used to deal with certain `planar
$4$-edge-cuts'.
 
 Let $\C_2$ be the class of $2$-connected cubic graphs, and let
$\G$ be a subclass of $\C_2$ closed under taking minors in $\C_2$ (i.e.,
if $G_1 \in \G$ and $G_2 \in \C_2$ is a minor of $G_1$, then $G_2 \in
\G$ also).  
 In constructing orientable closed $2$-cell embeddings for graphs in
$\G$, standard reductions used for cycle double covers (see \cite[pp.
4--5]{Ja}) can be applied to deal with any $2$-edge-cut, or nontrivial
$3$-edge-cut (a $3$-edge-cut that is not just the edges incident with a
single vertex).  Since we can exclude $2$-edge-cuts, we can also exclude
multiple edges.
 A $2$-connected cubic graph with no $2$-edge-cuts or nontrivial
$3$-edge-cuts is said to be {\em cyclically-$4$-edge-connected\/}, so we
may state the following.

 \begin{lemma}\label{minproperties}
 Given $\G$ as above, an element $G$ of $\G$ having no orientable closed
$2$-cell embedding (i.e., no orientable cycle double cover) and, subject
to that, fewest edges is simple and cyclically $4$-edge-connected.
 \end{lemma}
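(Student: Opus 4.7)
The plan is to take a minimum counterexample $G$ in $\G$ --- one with no orientable closed $2$-cell embedding and, subject to that, the fewest edges --- and derive a contradiction from the presence in $G$ of a $2$-edge-cut, of a pair of parallel edges, or of a nontrivial $3$-edge-cut. Throughout I use two standing facts: for cubic graphs an orientable closed $2$-cell embedding is equivalent to an orientable (compatibly oriented) cycle double cover, as recorded in the Introduction; and the hypothesis that $\G$ is closed under $\C_2$-minors means every $2$-connected cubic minor of $G$ again lies in $\G$, so if it has strictly fewer edges than $G$ then it admits an orientable CDC by minimality.

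First I would eliminate $2$-edge-cuts. Given a $2$-edge-cut $\{e_1,e_2\}$ with sides $A,B$, form $G_A$ by contracting $B$ down to a single new edge joining the two attachment points in $A$ (that is, collapse $B$ to a point and suppress the resulting degree-$2$ vertex), and symmetrically define $G_B$. Both are cubic, $2$-connected, and $\C_2$-minors of $G$ with strictly fewer edges, so each has an orientable CDC, $\dC_A$ and $\dC_B$. The new edge $f_A$ of $G_A$ lies on two oppositely oriented cycles of $\dC_A$, and likewise for $f_B$ in $\dC_B$; cutting those cycles at $f_A, f_B$ and splicing the four resulting oriented arcs across $e_1$ and $e_2$ --- reversing all orientations in $\dC_B$ if necessary to make the orientations agree along $e_1$ --- yields an orientable CDC of $G$, contradicting its choice.

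Parallel edges are then immediate: if $G$ has two parallel edges between $u$ and $v$, then either the third edges at $u$ and $v$ form a $2$-edge-cut separating $\{u,v\}$ from the rest of $G$ (contradicting the previous step), or $G$ consists of two vertices joined by three parallel edges, which has an obvious orientable CDC given by its three digons and so is not a counterexample. Hence $G$ is simple. For a nontrivial $3$-edge-cut $\{e_1,e_2,e_3\}$ --- one whose two sides each contain at least two vertices --- I analogously form $G_A$ and $G_B$ by collapsing the opposite side to a single new degree-$3$ vertex incident with the three cut edges. Both are cubic $2$-connected $\C_2$-minors of $G$ with strictly fewer edges, hence have orientable CDCs $\dC_A, \dC_B$, which I want to splice along $\{e_1,e_2,e_3\}$.

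The main obstacle is the splicing step for the $3$-edge-cut: at the new degree-$3$ vertex $w_A$ of $G_A$ the three CDC cycles passing through $w_A$ use the cut edges in some pairing (each of the three local cycles covers exactly two of $e_1, e_2, e_3$), and similarly at $w_B$, and one must check that after at most one global orientation reversal on one side these two pairings align so that the glued cycles cover each of $e_1, e_2, e_3$ exactly twice with opposite orientations. Once this bookkeeping is done --- it is the standard argument cited in the Introduction from Jaeger's survey \cite{Ja} --- the resulting orientable CDC of $G$ contradicts the choice of $G$, completing the proof.
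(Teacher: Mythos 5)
Your proposal is correct and takes essentially the same route as the paper: the paper offers no detailed argument for this lemma, simply invoking the standard $2$-edge-cut and nontrivial $3$-edge-cut splicing reductions from Jaeger's survey \cite{Ja}, and your write-up spells out exactly those reductions (correctly noting the one point needing care in the orientable setting, namely that at the new degree-$3$ vertex the three cycle passages necessarily pair the cut edges in all three ways, so at most one global orientation reversal on one side makes the glued cycles a compatibly oriented cycle double cover). Nothing beyond what you wrote is needed.
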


 Since $G$ has no nontrivial $3$-edge-cuts, and since $K_4$ has an
orientable embedding, it follows that $G$ as in Lemma
\ref{minproperties} has no triangles ($3$-cycles).
 For the Cycle Double Cover Conjecture there are also reductions to
exclude $4$-cycles and in fact cycles of length up to $11$, due to
Goddyn \cite{GoDiss}  (see also \cite[pp. 155--158]{Zhang}) and Huck
\cite{Hu}.
 However, these reductions do not work if we add the
condition of orientability.  Here we show that there is at least a
$4$-cycle reduction for the Orientable Cycle Double Cover Conjecture.

 \begin{lemma}\label{no4cycle}
 Given $\G$ as above, an element $G$ of $\G$ having no orientable closed
$2$-cell embedding (i.e., no orientable cycle double cover) and, subject
to that, fewest edges has no $4$-cycle.
 \end{lemma}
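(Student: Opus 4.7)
Suppose for contradiction there is a smallest counterexample $G \in \G$ containing a $4$-cycle $C = v_1 v_2 v_3 v_4$. By Lemma \ref{minproperties} and the triangle-free remark immediately after it, $G$ is simple, cubic, triangle-free, and cyclically $4$-edge-connected. Let $e_i = v_i u_i$ be the edge at $v_i$ not on $C$; the absence of triangles and parallel edges forces $u_1, u_2, u_3, u_4$ to be four distinct vertices outside $V(C)$.

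The plan is to use two parallel reductions across the $4$-edge cut $\{e_1, e_2, e_3, e_4\}$. Let $G^A$ be obtained from $G - V(C)$ by adding the new edges $u_1 u_2$ and $u_3 u_4$, and let $G^B$ be obtained similarly by adding $u_2 u_3$ and $u_4 u_1$. Each of $G^A, G^B$ is cubic, $2$-connected (by cyclic $4$-edge-connectivity of $G$), has strictly fewer edges than $G$, and is a minor of $G$ in $\C_2$ (contract two opposite edges of $C$, delete the resulting pair of parallel edges, and suppress the two degree-$2$ vertices). Hence each lies in $\G$ and, by minimality of $G$, each has an orientable cycle double cover; call them $\C^A$ and $\C^B$. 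Any multiple edges accidentally introduced by the reductions are absorbed by the standard $2$-edge-cut reductions referenced before Lemma \ref{minproperties}.

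To lift $\C^A$ back to $G$, I would replace each directed traversal of $u_1 u_2$ or $u_3 u_4$ in $\C^A$ by a directed path through $C$, with two options per traversal: the \emph{short} path (e.g.\ $u_1 \to v_1 \to v_2 \to u_2$) or the \emph{long} path $u_1 \to v_1 \to v_4 \to v_3 \to v_2 \to u_2$, and analogously for the other three traversals. A direct bookkeeping check shows that, when the four directed traversals of $\{u_1 u_2, u_3 u_4\}$ lie in four distinct cycles of $\C^A$, the assignment short, long, long, short to the traversals $u_1 \to u_2$, $u_2 \to u_1$, $u_3 \to u_4$, $u_4 \to u_3$ yields four lifted simple cycles that together cover every edge of $C$ and every $e_i$ exactly once in each direction. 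Combined with the cycles of $\C^A$ that avoid both new edges, this produces an orientable cycle double cover of $G$ and the desired contradiction.

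The main obstacle is the \emph{coupling} case, in which some cycle of $\C^A$ uses both new edges. Among the four short/long combinations, only short-short keeps the two path substitutions within such a cycle vertex-disjoint, so any coupled cycle is forced to lift short-short; a short local analysis then shows that even a single coupling forces a pair of opposite edges of $C$ (such as $v_2 v_3$ and $v_4 v_1$) to remain uncovered, with the incidences at $v_1, v_2, v_3, v_4$ already saturated, making $\C^A$ alone unliftable. To escape, I would argue by a parity/port-pairing argument at the cut $\{e_1, e_2, e_3, e_4\}$ that the coupling obstruction cannot occur simultaneously for both $\C^A$ and $\C^B$: whether the two new edges of a reduction lie in a common cycle of its CDC is controlled by how the CDC pairs up the four ports $u_1, u_2, u_3, u_4$ through the cut, and switching the reduction pairing from $\{u_1 u_2, u_3 u_4\}$ to $\{u_2 u_3, u_4 u_1\}$ should toggle this coupling. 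The lift then succeeds in whichever of $\C^A, \C^B$ is coupling-free, contradicting the choice of $G$. This coupling/parity analysis at the $4$-edge cut is the technical heart of the argument and the step where I expect the main difficulty to lie.
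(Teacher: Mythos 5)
There is a genuine gap, and it sits exactly where you flag it. Your escape route requires that the ``coupling'' obstruction cannot occur for both $\C^A$ and $\C^B$, but $\C^A$ and $\C^B$ are arbitrary orientable cycle double covers of two \emph{different} graphs, each supplied independently by the minimality hypothesis; no invariant of $G$ or of the $4$-edge-cut ties how the chosen double cover of $G^A$ threads its cycles through $u_1u_2$ and $u_3u_4$ to how the chosen double cover of $G^B$ threads through $u_2u_3$ and $u_4u_1$. To make the argument work you would have to show that not \emph{every} orientable CDC of $G^A$ and \emph{every} orientable CDC of $G^B$ is coupled, and nothing in the proposal does that; ``switching the reduction pairing should toggle the coupling'' is not a statement about any single object you actually hold. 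In addition, the local analysis of the coupled case is off in both directions. First, a single coupling does not make $\C^A$ unliftable: if the coupled cycle traverses the new edges as $u_1\to u_2$ and $u_4\to u_3$, then short--short on it and long--long on the other two traversals covers every edge of $C$ and every $e_i$ once in each direction. Second, in the genuinely bad pattern (one cycle using $u_1\to u_2$ and $u_3\to u_4$), the lift is still possible once you allow a move your framework excludes: split the coupled cycle into two cycles routed through opposite corners of $C$, give the other two traversals short substitutions, and add the $4$-cycle $C$ itself as an extra member of the double cover. (A smaller slip: triangle-freeness rules out $u_i=u_{i+1}$ but not $u_1=u_3$; distinctness of the $u_i$ needs cyclic $4$-edge-connectivity together with $|V(G)|\ge 8$, which the paper gets from $K_{3,3}$ having an orientable closed $2$-cell embedding.)

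For comparison, the paper uses a single reduction $G'=(G-V(C))\cup\{u_1u_2,u_3u_4\}$, takes an orientable closed $2$-cell \emph{embedding} of it, and splits into cases according to which of the four faces at the two new edges coincide: if $f_1=g_2$ or $f_2=g_1$, both chords $v_1v_4,v_2v_3$ are inserted inside that one face (this is exactly the ``split the coupled cycle and add $C$'' fix above); if $f_1=g_1$ and $f_2=g_2$, one chord is inserted in each face; otherwise a handle is added between $f_1$ and $g_1$ and the two chords are routed along it --- your uncoupled short/long/long/short bookkeeping is precisely the combinatorial shadow of this handle case, and that part of your proposal is correct. So the right repair is not a second reduction $G^B$ plus a parity claim, but an enlargement of the admissible local reconstructions (allowing a coupled cycle to be split into two cycles plus $C$, and allowing the two faces/cycles in the generic case to be ``joined'' as the handle does); with that repertoire the case analysis closes with one reduction. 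As written, however, your argument does not close.
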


 \begin{proof}
 By Lemma \ref{minproperties} $G$ is simple,
cyclically-$4$-edge-connected, and has no triangles.  Since $K_{3,3}$
has a toroidal closed $2$-cell embedding (in which there are three
faces, all hamilton cycles), we know that $|V(G)| \ge 8$.

 Assume that $G$ contains a $4$-cycle $C = (v_1v_2v_3v_4)$.  Since $G$
has no triangles, each $v_i$ has a neighbour $u_i \notin V(C)$.
 Since $|V(G)| \ge 8$, cyclic-$4$-edge-connectivity implies that $u_1$,
$u_2$, $u_3$ and $u_4$ are distinct, and moreover that $G' = (G-V(C))
\cup \{u_1u_2, u_3u_4\}$ is $2$-connected.  Hence $G' \in \G$, so since
$G'$ has fewer edges than $G$ it has an orientable closed $2$-cell
embedding $\Psi'$.  Suppose the faces of $\Psi'$ containing $u_1u_2$ are
$f_1$ and $f_2$, where $\bdy f_1$ traversed clockwise uses directed edge
$u_2u_1$, and $\bdy f_2$ uses $u_1u_2$.  Suppose the faces containing
$u_3u_4$ are $g_1$ and $g_2$, where $\bdy g_1$ uses directed edge
$u_3u_4$ when traversed clockwise, and $\bdy g_2$ uses $u_4u_3$.  In
$\Psi'$ subdivide $u_1u_2$ to obtain the path $u_1v_1v_2u_2$, and
$u_3u_4$ to obtain the path $u_3v_3v_4u_4$.

 If $f_1=g_2$ we may add the edges $v_1v_4$ and $v_2v_3$ inside $f_1$ to
obtain an orientable closed $2$-cell embedding of $G$.  A similar
argument applies if $f_2=g_1$.  Thus, $f_1 \ne g_2$ and $f_2 \ne g_1$.

 If in addition $f_1 \ne g_1$ then we add a handle from $f_1$ to $g_1$
and add edges $v_1v_4$, $v_2v_3$ along the handle, to obtain an
orientable embedding of $G$.  In the new embedding the faces $f_1$ and
$g_1$ are replaced by $f_1'$ and $g_1'$, where $\bdy f_1'=(\bdy f_1 -
v_2v_1) \cup v_2v_3v_4v_1$ and $\bdy g_1'=(\bdy g_1 - v_3v_4) \cup
v_3v_2v_1v_4$.  Now $\bdy f_1'$ is a cycle because $f_1 \ne g_2$, and
$\bdy g_1'$ is a cycle because $g_1 \ne f_2$, so this is an orientable
closed $2$-cell embedding of $G$.
 Therefore, $f_1 = g_1$, and similarly $f_2=g_2$.

 Since $f_1=g_1$ and $f_2=g_2$ we may add the edge $v_1v_4$ inside $f_1$
and add the edge $v_2v_3$ inside $f_2$, to obtain an orientable closed
$2$-cell embedding of $G$.

 Since $G$ has an orientable closed $2$-cell embedding in all cases, our
assumption was false, and $G$ has no $4$-cycle, as required.
 \end{proof}

 We may apply our results when $\G$ is the class of projective-planar
cubic graphs.

 \begin{corollary}\label{minproperties2}
 Let $G$ be a $2$-connected projective-planar
cubic graph that has fewest edges subject to having no orientable
closed $2$-cell embedding.  Then $G$ is simple, cyclically-$4$-edge-connected,
and has no $3$- or $4$-cycles.
 \end{corollary}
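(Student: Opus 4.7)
The plan is to deduce Corollary \ref{minproperties2} as a direct specialization of Lemmas \ref{minproperties} and \ref{no4cycle}, together with the triangle observation made immediately after Lemma \ref{minproperties}. All the substantive work is already done in those lemmas; what remains is to verify the one hypothesis they require, namely that the class $\G$ of projective-planar cubic graphs, sitting inside $\C_2$, is closed under taking minors in $\C_2$.

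To check the minor-closure, I would take any $G_1 \in \G$ and any $G_2 \in \C_2$ that arises as a minor of $G_1$, and argue that $G_2$ is projective-planar. This is standard: projective-planarity is preserved under edge deletion and edge contraction, since these operations can be realized inside any fixed projective-planar embedding of $G_1$ without leaving the surface. Combined with the assumption that $G_2$ is already $2$-connected and cubic by virtue of lying in $\C_2$, this places $G_2$ in $\G$.

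With minor-closure in hand, Lemma \ref{minproperties} applied to $\G$ immediately yields that an edge-minimum counterexample $G$ in $\G$ is simple and cyclically-$4$-edge-connected, and Lemma \ref{no4cycle} yields that $G$ has no $4$-cycle. For the absence of triangles I would invoke the argument sketched just after Lemma \ref{minproperties}: if $G$ contained a triangle $v_1v_2v_3$, then by cubicity the three remaining edges $v_iu_i$ out of the triangle form a $3$-edge-cut separating $\{v_1,v_2,v_3\}$ from its complement. This cut is trivial exactly when the other side reduces to a single vertex, which forces $G = K_4$; but $K_4$ has an orientable closed $2$-cell embedding in the sphere, contradicting the choice of $G$. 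Otherwise the cut is nontrivial, contradicting cyclic-$4$-edge-connectivity. Either way, $G$ contains no triangle.

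The main obstacle, such as it is, is only the minor-closure verification and the small amount of bookkeeping in the triangle step to ensure that triviality of the cut really does force $G = K_4$; everything else is a direct quotation from earlier in the paper.
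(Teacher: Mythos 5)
Your proposal is correct and matches the paper's (implicit) proof: the paper obtains the corollary by applying Lemma \ref{minproperties}, the triangle remark following it, and Lemma \ref{no4cycle} to the class $\G$ of projective-planar cubic graphs, with the only needed check being that this class is closed under taking minors within $\C_2$, exactly as you verify. Your spelled-out triangle argument (trivial cut forces $K_4$, which has a spherical closed $2$-cell embedding) is precisely the paper's one-sentence justification made explicit.
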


 Now we introduce the technical result for dealing with certain `planar
$4$-edge-cuts.'  It is easier to prove it first in a dual form, for
near-triangulations.
 A {\em near-triangulation\/} is a graph embedded in the plane so that
every face is a triangle, except possibly for the outer face, which is a
cycle.
 A {\em separating cycle\/} in an embedded graph is a cycle whose
removal disconnects the surface into two components, each of which
contains at least one vertex of the graph.

 If $G$ is a connected graph embedded in the plane, $\bdy G$ represents
its outer walk.  An {\em interior\/} vertex of $G$ is a vertex not on
$\bdy G$.  An {\em interior path\/} in $G$ is a path none of whose
internal vertices lie on $\bdy G$ (although one or both ends may lie on
$\bdy G$).  If $C = (v_1 v_2 v_3 \ldots v_k)$ is a cycle of $G$, then
$I_G(C)$ or $I_G(v_1 v_2 v_3 \ldots v_k)$ represents the embedded
subgraph of $G$ on and inside $C$.  If $G$ is understood we just write
$I(C)$ or $I(v_1 v_2 v_3 \ldots v_k)$.

 $N_G(v)$ represents the set of neighbors of vertex $v$ in $G$, and
$N_G[v]$ represents $N_G(v) \cup \{v\}$.
 If $H$ is a subgraph of $G$, then a {\em chord\/} of $H$ is an edge
that is not in $H$ but whose two ends are in $H$.  If $x$ is a cutvertex
of $H$, then a chord of $H$ is {\em $x$-jumping\/} if its ends lie in
different components of $H - x$.

 \begin{observation}\label{A0}
 In a near-triangulation, a minimal cutset separating two given
nonadjacent vertices induces either a chordless separating cycle, or a
chordless interior path with ends on the outer cycle.
 \end{observation}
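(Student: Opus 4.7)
My plan is to reduce to the classical statement that a minimal cutset between two nonadjacent vertices of a plane triangulation induces a chordless separating cycle, and then read off the path case according to whether the cycle uses an auxiliary apex. Write $G$ for the near-triangulation with outer cycle $C$, let $u,v$ be the two nonadjacent vertices, let $S$ be the minimal $u,v$-separator, and let $U,V$ be the components of $G-S$ containing $u,v$; the minimality of $S$ immediately gives that every $s\in S$ has a neighbour in $U$ and a neighbour in $V$. I insert an apex vertex $x$ in the outer face of $G$ joined to every vertex of $C$, forming a plane triangulation $G^+$. When $C$ meets both $U$ and $V$ the set $S$ no longer separates $u$ from $v$ in $G^+$, so I take $S^+:=S\cup\{x\}$; otherwise $S^+:=S$. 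In either case $S^+$ is a minimal $u,v$-separator in $G^+$; the components $U^+,V^+$ of $G^+-S^+$ extend $U,V$, and each $s\in S^+$ still has a $U^+$- and a $V^+$-neighbour.

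The heart of the argument is the triangulation case. Partition the triangular faces of $G^+$ into \emph{type $U$}, \emph{type $V$}, and \emph{type $S$}, according as a face has a vertex in $U^+$, a vertex in $V^+$, or all three vertices in $S^+$ (this is a genuine partition because no edge of $G^+$ joins $U^+$ to $V^+$). Let $R_U$ be the union of the closed type-$U$ faces. Two topological facts about $R_U$ carry the proof. First, $R_U$ has no pinch point at any $s\in S^+$: a pinch would mean that the type-$U$ faces at $s$ split into two arcs in the cyclic order of faces around $s$, and connecting two $U^+$-neighbours of $s$ from these two arcs by a path through $U^+$ (which exists because $U^+$ is connected and avoids $s$) produces, together with the spokes at $s$, a Jordan curve separating the two non-$U$ arcs, so that the $V^+$-path between two $V^+$-neighbours of $s$ in those arcs (or, when one non-$U$ arc is all-$S$, the $V^+$-neighbour demanded by minimality of an intermediate $S^+$-vertex in that arc) has nowhere to go. Second, the complement of $R_U$ has no component disjoint from $V^+$: any boundary vertex $s$ of such an island would, by minimality, need a $V^+$-neighbour in the main component, and the required edge cannot escape the island unless $R_U$ were pinched at $s$, which the first fact excludes.

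These two facts combine to show that $R_U$ is a closed topological disk, so $\partial R_U$ is a single Jordan curve tracing a cycle $B$ in $G^+$ with vertex set exactly $S^+$ (each $s\in S^+$ lies on $\partial R_U$ because it has both $U^+$- and $V^+$-neighbours). The chordlessness of $B$ then follows from a matched pair of arguments: a chord on the $U$-side of $B$ would split $R_U$ with $U^+$-vertices on both sides, contradicting the connectedness of $U^+$; a chord on the $V$-side would cut off a subregion whose interior $S^+$-vertices on the corresponding arc of $B$ could no longer reach $V^+$, contradicting minimality.

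Finally I translate back to $G$. If $x\notin S^+$ then $B\subseteq G$ is already the required chordless separating cycle. If $x\in S^+$ then $B$ passes through $x$, and deleting $x$ from $B$ yields a chordless path in $G[S]$ whose two endpoints are the $B$-neighbours of $x$ and so lie on $C$, while all internal vertices of the path, not being adjacent to $x$, are interior vertices of $G$---precisely the desired chordless interior path with ends on the outer cycle. The step I expect to require the most care is the non-pinching argument for $R_U$; this is where the full strength of minimality (rather than mere separation) of $S$ is needed, through a planar obstruction that forces the interlocking $U^+$- and $V^+$-paths to cross whenever a pinch at some $s\in S^+$ is hypothesised.
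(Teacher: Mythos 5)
Your argument is essentially correct, but note that the paper offers no proof of Observation \ref{A0} at all: it is stated as a folklore fact, so there is no ``paper proof'' to match, and what you have written supplies the missing justification. Your reduction is the natural one: add an apex $x$ over the outer cycle, check that $S$ or $S\cup\{x\}$ is a minimal separator of the triangulation $G^+$, invoke the classical fact that a minimal separator of two nonadjacent vertices of a plane triangulation induces a (necessarily chordless) cycle, and delete $x$ to get the interior-path alternative; the translation back to $G$ (the $B$-neighbours of $x$ lie on the outer cycle, internal vertices of $B-x$ are non-neighbours of $x$ and hence interior) is exactly right. Where you diverge from the usual treatment is in reproving the triangulation case via the region $R_U$; the standard shortcut is to walk around the component $U^+$ (contract it to a point): since all faces at $U^+$ are triangles, its neighbours in rotational order form a closed walk in $S^+$ with consecutive vertices adjacent, one extracts from it a cycle separating $u$ from $v$, and minimality forces $S^+$ to be its vertex set and forbids chords. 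Your disk argument buys a self-contained topological proof at the cost of extra work, and it needs two small repairs: the three face types need not exhaust all faces, since $G^+-S^+$ may have components other than $U^+$ and $V^+$, so in the pinch argument the dichotomy ``a $V^+$-neighbour of $s$ in the arc, or the arc is all-$S$'' should be replaced by ``a $V^+$-neighbour of $s$, or an $S^+$-neighbour of $s$ in that arc whose own $V^+$-neighbour is then trapped'' (such an $S^+$-neighbour always exists at the ends of a non-$U$ arc); and you should note explicitly that $R_U$ is connected (it is the closed star of the connected set $U^+$), which your ``closed disk'' conclusion tacitly uses.
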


 \begin{observation}\label{C0}
 Suppose $G$ is a near-triangulation with outer
cycle $\partial G$ and $x$, $y$ are vertices of $\partial G$ with $xy
\notin E(\partial G)$.  Then either $G$ has an interior $xy$-path, or
$\bdy G - y$ has an $x$-jumping chord.
 \end{observation}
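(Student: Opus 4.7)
The plan is to argue by induction on $|V(G)|$, via a case analysis on whether $xy$ is already an edge of $G$ and whether $\bdy G$ carries any chord. If $xy \in E(G)$, then $xy$ is a chord of $\bdy G$ and constitutes an interior $xy$-path of length one, so henceforth assume $xy \notin E(G)$.

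Suppose first that $\bdy G$ has a chord $ab$. This chord splits $G$ into two smaller near-triangulations $G_1, G_2$ meeting along $ab$, and $\{a,b\}$ cuts $\bdy G \setminus \{a,b\}$ into two arcs. If $x, y$ lie on opposite arcs, then $a$ and $b$ lie on opposite arcs of $(\bdy G - y) - x$, and $ab$ itself is the desired $x$-jumping chord. If instead $x, y$ lie on the same arc, WLOG both are in $G_1$. Since $\{x, y\} \ne \{a, b\}$ (else $xy \in E(G)$), one checks that $xy \notin E(\bdy G_1)$, so induction applies to $(G_1, x, y)$. An interior $xy$-path in $G_1$ lifts to $G$ because the interior vertex set of $G_1$ is contained in that of $G$; an $x$-jumping chord of $\bdy G_1 - y$ is automatically a chord of $\bdy G$ (its ends lie on $\bdy G$ and it is not a $\bdy G$-edge, since it is not a $\bdy G_1$-edge), and it remains $x$-jumping in $\bdy G - y$ because each arc of $(\bdy G_1 - y) - x$ is contained in the corresponding arc of $(\bdy G - y) - x$.

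The remaining case is $xy \notin E(G)$ with $\bdy G$ chordless; I will show directly that an interior $xy$-path exists. Write $\bdy G = v_0 v_1 \cdots v_{k-1} v_0$ with $x = v_0$ and $y = v_j$, so $k \ge 4$. For each edge $v_i v_{i+1}$ of $\bdy G$, the interior triangular face along $v_i v_{i+1}$ has a third vertex $t_i$ which must lie in the interior vertex set $I$ of $G$, since a boundary third vertex would produce a chord. At each $v_i$, the interior neighbors form a consecutive segment in the cyclic neighbor order about $v_i$, beginning with $t_i$ and ending with $t_{i-1}$, with successive pairs joined by edges coming from the triangles at $v_i$; hence $t_{i-1}$ and $t_i$ lie in the same component of $G[I]$. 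By transitivity, all of $t_0, t_1, \ldots, t_{k-1}$ lie in a common component $C^*$; a $t_0 t_{j-1}$-path in $C^*$, prepended with $x t_0$ and appended with $t_{j-1} y$, is then an interior $xy$-path.

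The main obstacle is the chordless case. Elsewhere the argument is largely careful bookkeeping, the subtlest point being the verification that the $x$-jumping property survives the chord split; but in the chordless case one needs two non-obvious ingredients: chordlessness forces every boundary-edge triangle to have an interior apex, and the fan of triangles at each boundary vertex then threads these apices into a single component of $G[I]$.
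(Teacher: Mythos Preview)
Your proof is correct. The paper itself gives no argument here --- the result is stated as an Observation, presumably because it follows quickly from Observation~\ref{A0} just above it. The intended short route is likely this: if there is no interior $xy$-path then $xy\notin E(G)$ and $V(\partial G)\setminus\{x,y\}$ separates $x$ from $y$; a minimal such separator $S$ induces, by Observation~\ref{A0}, either a chordless interior path (so $|S|\le 2$ since $S\subseteq V(\partial G)$, hence $|S|=2$ by $2$-connectedness of near-triangulations) or a chordless separating cycle; since $S$ cannot lie entirely in one arc of $\partial G-\{x,y\}$ (the other arc would carry an $xy$-path avoiding $S$), either alternative yields an edge with one end in each arc, which is exactly an $x$-jumping chord of $\partial G - y$. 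Your induction with a direct chordless-case argument is longer but self-contained, not relying on Observation~\ref{A0}, and your chordless analysis actually extracts a bit more structure than needed (all boundary-triangle apices lie in a single interior component). One minor omission worth noting: your opposite-arc/same-arc dichotomy for the chord $ab$ tacitly assumes $x,y\notin\{a,b\}$; when $x$ or $y$ coincides with $a$ or $b$ you are automatically in the same-side branch (both $x$ and $y$ lie in one of $G_1,G_2$), and the inductive step goes through unchanged.
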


 \begin{proposition}\label{old5.4}
 Let $G$ be a simple graph (no loops or multiple edges) embedded in the
plane so that
 all faces are triangles except the outer face, which is a $4$-cycle
$(v_1v_2v_3v_4)$ in that clockwise order;
 there is at least one interior vertex; and
 there are no separating triangles.
 \begin{itemize}
 \item[\rm (i)]Then there is a chordless interior $v_1 v_3$-path in $G$.
 \item[\rm (ii)] Moreover, if all chordless interior $v_1v_3$-paths in
$G$ have length of the same parity (all even, or all odd), then $G$ has
an interior vertex of degree $4$.
 \end{itemize}
 \end{proposition}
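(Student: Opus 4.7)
\emph{Part (i).} I would apply Observation~\ref{C0} with $x=v_1$, $y=v_3$. Either $G$ has an interior $v_1v_3$-path (which, by iteratively replacing a chord-spanned subpath by that chord, may be shortened to a chordless interior $v_1v_3$-path), or else $\partial G - v_3 = v_4v_1v_2$ has a $v_1$-jumping chord, necessarily the edge $v_2v_4$. But $v_2v_4$ would split the interior of the outer $4$-cycle into triangles $v_1v_2v_4$ and $v_2v_3v_4$, and the assumed interior vertex would lie in one of them, making it a separating triangle --- contradicting the hypothesis. (The symmetric argument gives $v_1v_3 \notin E(G)$, which we use below; in particular every chordless interior $v_1v_3$-path has length $k \ge 2$.)

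\emph{Part (ii).} Argue by contradiction: assume no interior vertex has degree $4$. A degree-$3$ interior vertex would force its three pairwise-adjacent neighbours to bound a separating triangle, so every interior vertex has degree $\ge 5$. Fix a shortest chordless interior $v_1v_3$-path $P = v_1x_1\ldots x_{k-1}v_3$; it partitions the closed disk inside $\partial G$ into near-triangulations $R_2$ and $R_4$ containing $v_2$ and $v_4$ respectively, each inheriting the no-separating-triangle property. The key structural sub-claim is: \emph{if $R_i$ has no interior vertex of $G$, then $v_i$ is adjacent to every $x_j$} (so $R_i$ is the fan from $v_i$). This is an ear-peeling argument on the triangulated $(k+2)$-gon bounding $R_i$: an ear at any $x_j$ would force the chord $x_{j-1}x_{j+1}$ of $P$; an ear at $v_i$ would force $v_1v_3 \in E(G)$, contrary to part~(i); so the only admissible ears are at $v_1$ and $v_3$, yielding chords $v_ix_1$ and $v_ix_{k-1}$, and recursing on the smaller polygon that remains propagates the fan to every $x_j$. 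Consequently, if \emph{both} $R_2$ and $R_4$ lack interior vertices, every $x_j$ has neighbours exactly $\{x_{j-1}, x_{j+1}, v_2, v_4\}$, hence degree $4$, contradicting our assumption.

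So, after re-labelling if necessary, $R_2$ contains an interior vertex of $G$, and the goal becomes producing a chordless interior $v_1v_3$-path in $G$ of parity opposite to $k$, contradicting the hypothesis of (ii). The natural construction is to detour $P$ locally at some $x_j$ through the $R_2$-fan of $x_j$: chordlessness of $P$ forces this fan to have at least one intermediate neighbour, and the bound $\deg(x_j) \ge 5$ together with the symmetric $R_4$-structure supplies a $j$ and a side whose fan length is even, which flips the parity of the replaced subpath. \emph{The main obstacle} is the parity bookkeeping: a detour generally introduces new chords, and shortcutting through a chord at odd path-distance on the detour flips parity back, so one must show that a suitable detour exists whose chordless shortening still has the desired parity. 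I expect this is best handled by induction on the number of interior vertices, using a generalisation of the statement that allows outer polygons of length $\ge 4$ with a designated non-adjacent boundary pair; applying this generalisation to $R_2$ with boundary pair $(v_1,v_3)$ produces either an interior degree-$4$ vertex of $R_2$ (also interior in $G$) or two chordless $v_1v_3$-paths in $R_2$ of opposite parities (also valid chordless interior $v_1v_3$-paths in $G$), either outcome contradicting the assumptions on $G$.
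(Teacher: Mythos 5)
Your part (i) is fine and is essentially the paper's argument, and your observation that if both sides of a shortest chordless interior $v_1v_3$-path are fans from $v_2$ and $v_4$ then some $x_j$ is an interior vertex of degree $4$ is correct. The gap is the final step, which is where virtually all of the difficulty lives: you delegate it to an unproved ``generalisation \dots\ that allows outer polygons of length $\ge 4$ with a designated non-adjacent boundary pair,'' claiming that applied to $R_2$ it yields either an interior degree-$4$ vertex of $R_2$ or two chordless interior $v_1v_3$-paths of opposite parities. That generalisation is false. Take the hexagon $v_1x_1x_2x_3v_3v_2$ (so one arc between the designated pair $v_1,v_3$ has length $2$, exactly the shape of $R_2$) with a single interior vertex $z$ joined to all six boundary vertices. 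This near-triangulation is simple, has no separating triangle, has an interior vertex, and its unique chordless interior $v_1v_3$-path is $v_1zv_3$, so the parity hypothesis holds; yet its only interior vertex has degree $6$, not $4$. Adding the side conditions available in your situation (interior degrees $\ge 5$ in $G$, one boundary arc of length $2$, the arc $v_1x_1x_2x_3v_3$ chordless) does not exclude this configuration, and nothing prevents such an $R_2$ from sitting inside a graph satisfying all the hypotheses of (ii); so the proposed induction cannot close. Your other idea, detouring $P$ through a fan at some $x_j$, is exactly the point at which you concede the ``parity bookkeeping'' is unresolved, so an opposite-parity chordless path is never actually produced.

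This is precisely the step the paper spends most of its proof on, and it does so without leaving the $4$-gon setting: it takes a vertex-minimal counterexample, uses minimality only to guarantee that the interior of every separating $4$-cycle $(t_1t_2t_3t_4)$ carries chordless interior $t_1t_3$-paths of both parities, rules out interior vertices adjacent to both $v_1,v_3$ or both $v_2,v_4$, classifies all $4$-cycles through $v_1$, and then performs a careful local analysis at $v_1$ and its neighbour $u_1$ (of degree $\ge 5$), culminating in the construction of a $w_1v_3$-path avoiding all neighbours of $v_1$, $u_1$, $u_2$, $w_2$ except $w_1$; prefixing that path by $v_1u_1w_1$ and by $v_1u_2w_2w_1$ gives two chordless interior $v_1v_3$-paths of different parities. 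Your proposal contains no substitute for this construction (nor for the separating-$4$-cycle mechanism that feeds it), so as written part (ii) is not proved.
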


 Nontrivial examples as in (ii) above do occur.  At left in Figure
\ref{fig-4c} is an example where all chordless interior $v_1 v_3$-paths
have even length; it has several interior vertices of degree $4$.

 \begin{proof}
 \startclaim
 For (i), $v_2 v_4 \notin E(G)$ because there are no separating
triangles, and hence by Observation \ref{C0} there is an interior $v_1
v_3$-path.  A shortest such path is chordless.

 \smallskip
 We prove (ii) by contradiction.  Assume it does not hold, and $G$
is a counterexample with fewest vertices.  So, all interior
$v_1v_3$-paths have length of the same parity, but there is no interior
vertex of degree $4$.
 If $|V(G)| = 5$, $G$ is a wheel with central vertex of degree $4$, so
we must have $|V(G)| \ge 6$.
 Since $G$ is a counterexample and there are no separating triangles,
every interior vertex has degree at least $5$.
 Since there are no separating triangles, $v_1v_3$, $v_2v_4 \notin
E(G)$.

 \begin{claim}\label{B0}
 If $C = (t_1 t_2 t_3 t_4)$ is a separating $4$-cycle then
there are chordless interior $t_1 t_3$-paths in $I(C)$ whose lengths
have different parities.
 \end{claim}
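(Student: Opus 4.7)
My plan is to invoke the minimality of $G$ applied to the proper subgraph $I(C)$. First I verify that $I(C)$, with outer $4$-cycle $C = (t_1t_2t_3t_4)$, satisfies the hypotheses of Proposition \ref{old5.4}: simplicity and the triangulation of bounded faces are inherited from $G$; because $C$ is separating, the side of $C$ contained in $I(C)$ has at least one vertex of $G$ (an interior vertex of $I(C)$), while the other side has at least one vertex of $G$ outside $I(C)$, so $|V(I(C))| < |V(G)|$; and any separating triangle of $I(C)$ would, via the nonempty other side of $C$, also be a separating triangle of $G$, which is excluded. In particular, neither $t_1t_3$ nor $t_2t_4$ can lie inside $I(C)$ as a chord of $C$, since such an edge together with two sides of $C$ would form a separating triangle.

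Next I apply Proposition \ref{old5.4}(ii) to $I(C)$. Any interior vertex $w$ of $I(C)$ lies strictly inside $C$, so it is an interior vertex of $G$; moreover, no edge of $G$ can cross the cycle $C$, so every edge of $G$ incident with $w$ already lies in $I(C)$, giving $\deg_G(w) = \deg_{I(C)}(w)$. Hence if $I(C)$ had an interior vertex of degree $4$, so would $G$, contradicting our standing assumption. The contrapositive of Proposition \ref{old5.4}(ii), applied to $I(C)$ (valid by minimality of $G$, since $I(C)$ has strictly fewer vertices), therefore forces the chordless interior $t_1t_3$-paths of $I(C)$ not all to share the same parity, which is exactly Claim \ref{B0}.

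I foresee no real obstacle; the only care needed is in the faithful transfer of properties between $I(C)$ and $G$, namely the strict vertex-count inequality, the inheritance of the no-separating-triangle hypothesis, and the equality of degrees for interior vertices of $I(C)$. All three are immediate consequences of $C$ being a separating cycle in the plane embedding of $G$.
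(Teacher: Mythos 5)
Your proposal is correct and takes essentially the same route as the paper: apply the minimality of the counterexample $G$ to the smaller near-triangulation $I(C)$ (which inherits all hypotheses of Proposition \ref{old5.4} and has an interior vertex since $C$ is separating), so that same-parity paths would force an interior vertex of degree $4$ in $I(C)$, hence in $G$, a contradiction. The paper's proof is just a terser statement of exactly this argument; your extra verifications (strict vertex count, inheritance of no separating triangles, degree equality for interior vertices) are the details it leaves implicit.
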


 \begin{claimproof}
 If not, then by minimality of $G$ there is an interior vertex
of $I(C)$ of degree $4$ in $I(C)$.  But this is also an interior vertex
of $G$ of degree $4$ in $G$, a contradiction.
 \end{claimproof}

 \begin{claim}\label{B1}
 There is no interior vertex adjacent to both $v_1$ and
$v_3$, or both $v_2$ and $v_4$.
 \end{claim}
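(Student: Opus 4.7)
The plan is to argue by contradiction, handling the two subclaims in parallel. In each case the adjacency of the hypothetical interior vertex $w$ to two opposite corners of the outer $4$-cycle creates a pair of auxiliary $4$-cycles in $G$ whose interiors together exhaust the inside of $\bdy G$; the heart of the proof is a dichotomy according to whether these $4$-cycles are separating.

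First I suppose $w$ is an interior vertex adjacent to both $v_1$ and $v_3$. The auxiliary $4$-cycles are $C_1=(v_1v_2v_3w)$ and $C_2=(v_1wv_3v_4)$. If $C_1$ is separating, then Claim \ref{B0} applied to $C_1$ (with $t_1=v_1$, $t_3=v_3$) produces two chordless interior $v_1v_3$-paths in $I(C_1)$ of different parities; since $v_4\notin I(C_1)$, these paths are also chordless interior $v_1v_3$-paths in $G$, contradicting the parity hypothesis, and the argument for $C_2$ is identical. If neither cycle is separating, then the region inside each $C_i$ is a single quadrilateral face that must be triangulated by one of its two diagonals; since $v_1v_3\notin E(G)$, the chords $v_2w$ and $v_4w$ are forced into $E(G)$. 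But then the four triangles $(v_1v_2w)$, $(v_2v_3w)$, $(v_3v_4w)$, $(v_4v_1w)$ account for every face at $w$, so $\deg(w)=4$, contradicting the standing lower bound $\deg(w)\ge 5$.

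Next I suppose instead that $w$ is adjacent to $v_2$ and $v_4$; this is the subtler case because the auxiliary $4$-cycles $C_a=(v_1v_2wv_4)$ and $C_b=(v_2v_3v_4w)$ have opposite-corner pairs $\{v_1,w\}$ and $\{v_3,w\}$, so Claim \ref{B0} would give $v_1w$-paths and $v_3w$-paths rather than $v_1v_3$-paths directly. To bridge this, I would invoke Proposition \ref{old5.4}(i) to produce a chordless interior $v_1v_3$-path $P$ in $G$, and then observe that planarity forces $P$ to cross the cut $\{v_2,w,v_4\}$ at an interior vertex, necessarily $w$, so that $P=v_1P_1wP_2v_3$ with $P_1\subseteq I(C_a)$ and $P_2\subseteq I(C_b)$. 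If $C_a$ is separating, Claim \ref{B0} produces two chordless interior $v_1w$-paths $Q,Q'$ in $I(C_a)$ of opposite parities; splicing each with $P_2$ yields two chordless interior $v_1v_3$-paths in $G$ of opposite parities, the desired contradiction; the case when $C_b$ separates is symmetric. If neither $C_a$ nor $C_b$ separates, the chord argument from the first case forces $v_1w,v_3w\in E(G)$, giving $\deg(w)=4$ and the same contradiction.

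The main obstacle will be verifying that the spliced paths $v_1QwP_2v_3$ and $v_1Q'wP_2v_3$ are still chordless and interior in $G$. This is where planarity does the work: any potential chord would have one endpoint in $Q\setminus\{w\}\subseteq\{v_1\}\cup(I(C_a)-C_a)$ and the other in $P_2\setminus\{w\}\subseteq\{v_3\}\cup(I(C_b)-C_b)$, and such an edge would either be $v_1v_3$ (ruled out since $v_1v_3\notin E(G)$) or would have to cross the separating path $v_2wv_4$, which is impossible in a plane embedding. The \emph{interior} condition is automatic, since the internal vertices of $Q$ avoid $C_a\supseteq\{v_2,v_4\}$, those of $P_2$ avoid $\bdy G$, and the splice vertex $w$ is itself interior.
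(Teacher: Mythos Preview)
Your proof is correct and follows essentially the same route as the paper: in both cases the key step is to splice the chordless interior paths of opposite parity supplied by Claim~\ref{B0} (applied to one of the two $4$-cycles cut off by $w$) with a fixed chordless path through the other piece, and your verification that the spliced paths remain chordless and interior is exactly the planarity argument the paper relies on implicitly. The paper's version is slightly more streamlined: it obtains the fixed $wv_3$-path directly inside $I(C_b)$ (via part~(i) applied to $I(C_b)$, or as the single edge $wv_3$ when $I(C_b)$ has no interior vertex) rather than as a segment of a global $v_1v_3$-path, and it invokes $|V(G)|\ge 6$ to guarantee that one of the two $4$-cycles is separating, thereby bypassing your ``neither separates'' subcase.
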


 \begin{claimproof}
 Suppose there is an interior vertex $v$ with $vv_1, vv_3 \in E(G)$.
 Since $|V(G)| \ge 6$, at least one of the subgraphs $I(v_1 v_2
v_3 v)$ or $I(v_1 v v_3 v_4)$ has an interior vertex.  But this subgraph
contradicts Claim \ref{B0}.

 \begin{figure}
 \centerline{\epsfbox{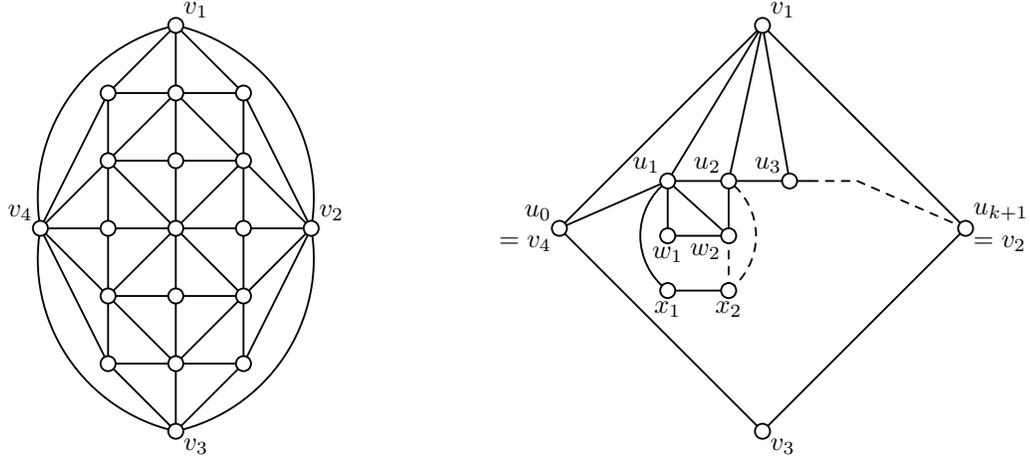}}
 \caption{Example with only even paths, and
 structure for the proof of Proposition \ref{old5.4}}%
 \label{fig-4c}
 \end{figure}

 If there is an interior vertex $v$ adjacent to both $v_2$ and $v_4$
then at least one of $G' = I(v_1 v_2 v v_4)$ and $G'' = I(v v_2 v_3
v_4)$ has an interior vertex; without loss of generality assume it is
$G'$.  There is a chordless interior $vv_3$-path $Q''$ in $G''$, by (i)
if $G''$ has an interior vertex, and because $v_2 v_4 \notin E(G)$ so
that $v v_3 \in E(G)$ if $G''$ has no interior vertex.  For every
chordless interior $v_1 v$-path $Q'$ in $G'$, $Q' \cup Q''$ is a
chordless interior $v_1 v_3$-path in $G$, so all such paths $Q'$ have
length of the same parity, contradicting Claim \ref{B0}.
 \end{claimproof}

 Let the neighbors of $v_1$ in anticlockwise order be $v_4 = u_0, u_1,
u_2, \ldots, u_k, u_{k+1} = v_2$.  By Claim \ref{B1}, $k \ge 2$.
 As all interior faces are triangles and there are no separating
triangles, $U = u_0u_1u_2 \ldots u_k u_{k+1}$ is a chordless path in
$G$.
 By Claim \ref{B1}, no vertex of $U - \{u_0, u_{k+1}\}$ is adjacent to
$v_3$.

 \begin{claim}\label{B2}
 Every $4$-cycle containing $v_1$ is $\bdy
G$, or has the form $(v_1 u_{i+2} u_{i+1} u_i)$, or has the form $(v_1
u_{i+1} x u_i)$ where $x$ is an interior vertex of $I(U \cup v_2v_3v_4)$.
 \end{claim}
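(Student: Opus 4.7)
Let $C = (v_1 a b c)$ be an arbitrary $4$-cycle through $v_1$. Since $N_G(v_1) = \{u_0,\dots,u_{k+1}\}$, I can write $a = u_i$ and $c = u_j$ with $0 \le i < j \le k+1$. The plan is to split into cases according to the pair $(i,j)$ and identify the location of $b$.

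If $(i,j)=(0,k+1)$, then $b$ is adjacent to both $v_4=u_0$ and $v_2=u_{k+1}$. The three possibilities give $C=\partial G$ (if $b=v_3$), a contradiction with Claim~\ref{B1} (if $b$ is an interior vertex of $G$), or, via chordlessness of $U$, $k=1$ and $b=u_1$, matching Type~2 $(v_1 u_2 u_1 u_0)$. If $j=i+1$, the triangle $v_1 u_i u_{i+1}$ is already a face, so the triangle $u_i u_{i+1} b$ (which exists because $b u_i, b u_{i+1}\in E(G)$) cannot be separating and must be the other face on $u_i u_{i+1}$, placing $b$ on the opposite side of $u_i u_{i+1}$ from $v_1$. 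Chordlessness of $U$, the fact that no vertex of $U-\{u_0,u_{k+1}\}$ is adjacent to $v_3$ (a consequence of Claim~\ref{B1}), and $v_2 v_4 \notin E(G)$ together rule out $b \in V(U)\cup\{v_3\}$; hence $b$ is an interior vertex of $I(U\cup v_2 v_3 v_4)$, yielding Type~3.

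The case $j=i+2$ is where Claim~\ref{B0} first bites. If $b=u_{i+1}$ we obtain Type~2. Otherwise $\tilde C=(u_i u_{i+1} u_{i+2} b)$ is a separating $4$-cycle; its interior $I(\tilde C)$ equals the wedge bounded by $v_1 u_i u_{i+1} u_{i+2}$, which is a union of exactly two triangular faces and therefore has $v_1$ as its unique interior vertex. Applying Claim~\ref{B0} with $t_1=u_i$, $t_3=u_{i+2}$ demands chordless interior $u_i u_{i+2}$-paths in $I(\tilde C)$ of both parities; but since $u_i u_{i+2}\notin E(G)$ (chordlessness of $U$) the only such path is $u_i v_1 u_{i+2}$, of length $2$. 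This contradiction forces $b=u_{i+1}$.

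The remaining case $j-i\ge 3$ with $(i,j)\ne(0,k+1)$ is the main obstacle; I would show no such $4$-cycle exists by choosing a counterexample minimizing $j-i$. Chordlessness of $U$ gives $b\notin V(U)$, and $b=v_3$ forces $(i,j)=(0,k+1)$, so $b\ne v_3$. Any edge $b u_\ell$ with $i<\ell<j$ would yield a shorter $4$-cycle $(v_1 u_i b u_\ell)$ or $(v_1 u_\ell b u_j)$ through $v_1$, which either contradicts minimality of $j-i$ or falls into the $j=i+2$ case with $b\ne u_{\ell\pm 1}$ (already ruled out via Claim~\ref{B0}); hence $b$ has no neighbor in $\{u_{i+1},\dots,u_{j-1}\}$. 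A final application of Claim~\ref{B0} to $C=(v_1 u_i b u_j)$ with $t_1=v_1$, $t_3=b$ then requires chordless interior $v_1 b$-paths of both parities in $I(C)$. The hardest part of the proof is to use the restricted adjacencies of $b$ to the wedge boundary (only at $u_i,u_j$), together with chordlessness of $U$ and Claim~\ref{B1}, to force all such chordless interior $v_1 b$-paths to share a single parity, giving the required contradiction.
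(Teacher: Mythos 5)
Your case split and the first two cases are fine, but the argument is not correct from the case $j=i+2$ onward. For the $4$-cycle $\tilde C=(u_i\,u_{i+1}\,u_{i+2}\,b)$ you identify $I(\tilde C)$ with the wedge containing $v_1$; that is the wrong side. Since $v_1$ lies on $\bdy G$, it is on the boundary of the unbounded face, which is disjoint from $\tilde C$, so $v_1$ lies in the \emph{exterior} of $\tilde C$. The bounded side $I(\tilde C)$ is the region between the paths $u_iu_{i+1}u_{i+2}$ and $u_i b u_{i+2}$, and it does contain interior vertices (the at least two further neighbours of $u_{i+1}$, which has degree at least $5$), so Claim \ref{B0} produces no contradiction there. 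Nor can Claim \ref{B0} be applied to the $v_1$-side: that region contains the quadrilateral outer face of $G$, so it is not of the form $I(C)$ for a separating $4$-cycle and the minimality argument behind Claim \ref{B0} does not apply to it. Hence the case $j=i+2$, $b\ne u_{i+1}$ is not ruled out, and your final case, which leans on it, inherits the problem.

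The case $j-i\ge 3$ also has a genuine gap of its own: applying Claim \ref{B0} to $C=(v_1 u_j b u_i)$ with $t_1=v_1$, $t_3=b$ is only useful if you can show that all chordless interior $v_1b$-paths in $I(C)$ have the same parity, and this is exactly the step you leave as ``the hardest part.'' The hypothesis of Proposition \ref{old5.4}(ii) concerns $v_1v_3$-paths, not $v_1b$-paths, so a transfer device is required, and restricted adjacencies of $b$ alone do not supply one. The paper's device is to work in the complementary region $G''=I(u_0u_1\ldots u_i\, b\, u_j\ldots u_{k+1} v_3)$: chordlessness of $U$ means $\bdy G''-v_3$ has no $b$-jumping chord, so Observation \ref{C0} gives an interior $bv_3$-path, a shortest one $Q''$ being chordless; then for every chordless interior $v_1b$-path $Q'$ in $I(C)$ the concatenation $Q'\cup Q''$ is a chordless interior $v_1v_3$-path of $G$ (no chords can cross the cycle $C$), so all such $Q'$ have one parity, contradicting Claim \ref{B0} since $u_{i+1}$ is an interior vertex of $I(C)$. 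This one argument covers every $j\ge i+2$ uniformly, making your minimization of $j-i$ and the claim that $b$ has no neighbour among $u_{i+1},\ldots,u_{j-1}$ unnecessary; without some such extension to $v_3$, your last two cases remain open.
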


 \begin{claimproof}
 Every $4$-cycle containing $v_1$ has the form $C= (v_1 u_j x u_i)$ with
$i < j$.  If $i = 0$ and $j = k+1$ then by Claim \ref{B1}, $C$ is $\bdy
G$.  So, without loss of generality, we may assume that $j \le k$.
 Then, also by Claim \ref{B1}, $x \ne v_3$.
 If $x$ is on $U$ then since $U$ is chordless, $C$ must be $(v_1 u_{i+2}
u_{i+1} u_i)$, as specified.  So we may assume that $x$ is an interior
vertex of $I(U \cup v_2v_3v_4)$.
 If $j = i+1$ then $C$ is as specified, so suppose $j \ge i+2$.  Let $G'
= I(C)$ and $G'' = I(u_0 u_1 \ldots u_i x u_j u_{j+1} \ldots u_{k+1}
v_3)$.  Since $U$ is chordless, Observation \ref{C0} implies that $G''$
has an interior $xv_3$-path.  Let $Q''$ be a shortest, hence chordless,
such path.  For any chordless interior $v_1x$-path $Q'$ in $G'$, $Q'
\cup Q''$ is a chordless interior $v_1 v_3$-path in $G$, so all such
paths $Q'$ have length of the same parity.  Since $G'$ has an interior
vertex, $u_{i+1}$, this contradicts Claim \ref{B0}.
 \end{claimproof}

 The following are immediate consequences of Claim \ref{B2}.

 \begin{claim}\label{B2A}
 There is no interior vertex $x$ not adjacent to $v_1$ but adjacent to
$u_i$ and $u_j$ with $|j-i| \ge 2$.
 \claimproofblock
 \end{claim}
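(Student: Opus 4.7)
The plan is to deduce Claim B2A directly from Claim B2 by contradiction. The main step is to produce, from any hypothetical offending vertex $x$, a $4$-cycle through $v_1$ and then observe that none of the three forms permitted by Claim B2 can accommodate $x$.

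Specifically, I would suppose toward contradiction that there is an interior vertex $x$, not adjacent to $v_1$, but adjacent to both $u_i$ and $u_j$ with $i<j$ and $j-i\ge 2$. First I would verify that the closed walk $v_1\,u_j\,x\,u_i\,v_1$ is a genuine $4$-cycle. The four vertices are pairwise distinct: $v_1\ne x$ because $x$ is interior while $v_1\in\partial G$; $u_i\ne u_j$ because $i<j$; and $x\ne u_i,u_j$ because every $u_\ell\in U$ is a neighbor of $v_1$, while $x$ is not. The four edges $v_1u_j$, $xu_j$, $xu_i$, $u_iv_1$ all exist by hypothesis, so we do have a $4$-cycle containing $v_1$.

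Then Claim B2 is invoked to classify this $4$-cycle into one of three forms, and each form leads to a contradiction. The outer-face case $\partial G$ is excluded because it would place $x$ in $\{v_2,v_3,v_4\}\subseteq\partial G$, against interiority. The case $(v_1\,u_{i'+2}\,u_{i'+1}\,u_{i'})$ forces $x=u_{i'+1}\in U$, hence $x$ is adjacent to $v_1$, contradicting the nonadjacency assumption. The remaining case $(v_1\,u_{i'+1}\,x\,u_{i'})$ forces $\{u_i,u_j\}=\{u_{i'},u_{i'+1}\}$ and therefore $|j-i|=1$, contradicting $|j-i|\ge 2$. All three cases fail, so no such $x$ can exist.

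I do not expect any genuine obstacle: the paper itself flags the statement as an immediate consequence of Claim B2, and the only routine bookkeeping is the check that the four vertices of the candidate $4$-cycle are distinct, which is forced by the interiority of $x$, the nonadjacency of $x$ to $v_1$, and the inequality $i<j$.
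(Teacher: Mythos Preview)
Your argument is correct and is exactly the intended one: the paper marks this claim as an immediate consequence of Claim~\ref{B2} and supplies no further proof, and your case analysis on the three forms in Claim~\ref{B2} carries this out in full. The only point worth noting is that in ruling out the $\partial G$ case you could be slightly more specific---the neighbors of $v_1$ on $\partial G$ are $v_2=u_{k+1}$ and $v_4=u_0$, so $x$ would have to equal $v_3$---but your interiority argument already covers this.
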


 \begin{claim}\label{B2B}
 There is no separating $4$-cycle in $G$ of the form $(v_1 v_2 x y)$ or
$(v_1 x y v_4)$, or (by symmetry) $(v_3 v_2 x y)$ or $(v_3 x y v_4)$.
 \claimproofblock
 \end{claim}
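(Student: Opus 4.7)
The plan is to deduce Claim \ref{B2B} directly from Claim \ref{B2} by enumerating, for each listed form of $4$-cycle, which of the three cases of Claim \ref{B2} it can fit, and then checking in each case that the $4$-cycle bounds a disk whose interior contains no vertex of $G$, so that the cycle cannot be separating.

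Consider first $C=(v_1 v_2 x y)$, which contains $v_1$. Because $v_2 = u_{k+1}$ is the neighbor of $v_1$ in $C$ on one side, matching against Claim \ref{B2} forces one of: (a) $C = \bdy G$, so $x = v_3$ and $y = v_4$; or (b) $C = (v_1 u_{i+2} u_{i+1} u_i)$ with $u_{i+2} = v_2$, forcing $i = k-1$ and $C = (v_1 v_2 u_k u_{k-1})$; or (c) $C = (v_1 u_{i+1} x u_i)$ with $x$ interior and $u_{i+1} = v_2$, forcing $i = k$ and $C = (v_1 v_2 x u_k)$ with $x$ an interior vertex. In (a) the outside of $\bdy G$ is the outer face of $G$ and contains no vertex of $G$. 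In (b) the edge $v_1 u_k$ is a chord of $C$ that splits the disk bounded by $C$ into the two triangular faces $v_1 u_{k-1} u_k$ and $v_1 u_k v_2$ of the fan around $v_1$, so no vertex of $G$ lies in that disk. In (c) the face $v_1 v_2 u_k$ of the fan around $v_1$ provides a chord $v_2 u_k$ of $C$, and the remaining sub-region is bounded by the triangle $v_2 x u_k$; this triangle must itself be a face, since all interior faces of $G$ are triangles and $G$ has no separating triangles, so again the disk has no interior vertex.

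The case $C = (v_1 x y v_4)$ is handled identically using $v_4 = u_0$ in place of $v_2 = u_{k+1}$. For cycles of the form $(v_3 v_2 x y)$ or $(v_3 x y v_4)$, we invoke the symmetry of the setup: the cyclic relabeling $(v_1,v_2,v_3,v_4)\mapsto(v_3,v_4,v_1,v_2)$ preserves every hypothesis of Proposition \ref{old5.4} and the standing parity assumption on chordless interior $v_1 v_3$-paths, and the proof of Claim \ref{B2} uses only these, so an analog of Claim \ref{B2} holds with $v_3$ in the role of $v_1$. The same disk arguments then apply verbatim. The main obstacle is purely the bookkeeping of the matching step in the first paragraph; once that is done, each case reduces to an elementary observation about the planar fan around $v_1$ (or $v_3$).
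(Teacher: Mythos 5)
Your proof is correct and follows exactly the route the paper intends: the paper offers no written argument, asserting Claim \ref{B2B} as an immediate consequence of Claim \ref{B2}, and your case analysis (matching each listed $4$-cycle against the three forms of Claim \ref{B2}, checking the bounded side contains no vertex via the fan faces at $v_1$ and the absence of separating triangles, and invoking the $v_1\leftrightarrow v_3$, $v_2\leftrightarrow v_4$ symmetry) is precisely the intended expansion of that remark.
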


 Since $u_1$ has degree at least $5$, there exist $w_1, w_2$ so that the
neighbors of $u_1$ in clockwise order are $v_4=u_0, v_1, u_2, w_2, w_1,
\ldots$.  Since $U$ is chordless, neither $w_1$ nor $w_2$ is on $U$, and
by Claim \ref{B1}, neither $w_1$ nor $w_2$ is $v_3$.  There are
triangular faces $(u_1 u_2 w_2)$ and $(u_1 w_2 w_1)$.  Since there are
no separating triangles, $u_0 w_2, w_1 u_2 \notin E(G)$.
 By Claim \ref{B2B}, $w_2 v_3 \notin E(G)$.  See the right side of
Figure~\ref{fig-4c}.

 Let $U' = u_0 u_1 w_1 w_2 u_2 u_3 \ldots u_{k+1}$, and let $H = I(U'
\cup v_2v_3v_4)$.  Since $U$ is chordless, since $u_0 w_2 \notin E(G)$,
and since $u_1u_2$ and $u_1 w_2$ are edges of $G$ but not $H$, $U'$ has
no $w_1$-jumping chord in $H$.

 \begin{claim}\label{w1v3path}
 There is an interior $w_1 v_3$-path in $H$ avoiding all neighbors (in
$G$) of $v_1$, $u_1$, $u_2$ and $w_2$, except $w_1$ itself.
 \end{claim}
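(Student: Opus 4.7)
The plan is to argue by contradiction. Write $A=\{v_1,u_1,u_2,w_2\}$ and $F=N_G(A)\setminus\{w_1\}$, and suppose no interior $w_1v_3$-path in $H$ avoids $F$. First I would verify that $\bdy H\setminus F=\{w_1,v_3\}$: every $u_i$ lies in $N_G(v_1)$, $w_2\in N_G(u_1)$, while $v_3$ is a neighbour of no element of $A$ --- indeed $v_3v_1\notin E(G)$ since $G$ has no separating triangle, $v_3u_1,v_3u_2\notin E(G)$ by Claim \ref{B1}, and $v_3w_2\notin E(G)$ by Claim \ref{B2B}. So the desired path exists iff $w_1$ and $v_3$ lie in the same component of $H-(F\setminus\{w_1\})$, and I may assume $w_1v_3\notin E(G)$, otherwise the single edge $w_1v_3$ is already such a path.

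Then $w_1,v_3$ are nonadjacent in the near-triangulation $H$, and by hypothesis some subset of $F\cap V(H)$ separates them. Starting from such a subset and shrinking, one obtains a minimal $w_1v_3$-cutset $S\subseteq F$ in $H$. By Observation \ref{A0}, $S=V(J)$ where $J$ is either a chordless separating cycle $C$ in $H$, or a chordless interior path $R$ in $H$ with both endpoints on $\bdy H$.

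The crucial geometric observation is that no interior vertex of $H$ is adjacent in $G$ to $v_1$: $v_1$ lies strictly outside the closed disk bounded by $\bdy H=U'\cup v_2v_3v_4$ while any interior vertex of $H$ lies strictly inside, so any such edge would cross $\bdy H$ in the planar embedding, which is impossible. Therefore every interior vertex of $J$ is a neighbour of one of $u_1,u_2,w_2$. Since $u_1,w_1,w_2,u_2$ form a consecutive arc $\alpha$ of $\bdy H$ and $J$ separates $w_1$ from $v_3$, planarity forces $\{u_1,u_2,w_2\}$ onto the $w_1$-side of $J$.

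The contradiction is then extracted by traversing $J$ one edge at a time. For each edge $xy$ of $J$, the triangular face of $H$ on the $w_1$-side of $xy$ has a third vertex $z$, and chordlessness of $J$ combined with minimality of $S$ forces $z$ either onto $\alpha$ or into an otherwise empty piece of the $w_1$-component of $H-S$. Labelling the $A$-anchors of $x$, $y$, and $z$, a short cycle through one or two members of $A$ emerges which, in each subcase, is either a separating triangle (contradicting the hypothesis of Proposition \ref{old5.4}), a separating $4$-cycle of the form $(v_1v_2xy)$ or $(v_1xyv_4)$ (contradicting Claim \ref{B2B}), or a configuration producing an interior vertex adjacent to $u_i$ and $u_j$ with $|i-j|\ge 2$ (contradicting Claim \ref{B2A}), completing the argument.

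The main obstacle will be this concluding case analysis: subcases split by which of $u_1,u_2,w_2$ each consecutive separator vertex is attached to and, when $J=R$, by the identity of $R$'s endpoints on $\bdy H$, which by the $w_1$/$v_3$ separation must be one from $\{u_0,u_1\}$ and one from $\{u_2,\dots,u_{k+1},w_2\}$. Degenerate short separators (a triangular $C$, or $R$ of length at most two) need individual handling, but each again reduces to one of the three forbidden configurations above.
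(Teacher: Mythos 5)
Your skeleton (contradict, extract a minimal $w_1v_3$-cutset inside the forbidden neighborhood set, apply Observation \ref{A0} to get a chordless separator with ends on $\bdy H$, then scan consecutive separator vertices and classify them by their anchors in $\{u_1,u_2,w_2\}$) is exactly the paper's, and your preliminary observations ($v_3\notin F$, $N_G(v_1)=V(U)\subseteq\bdy H$, exclusion of the separating-cycle alternative) are fine. But the concluding step has a genuine gap: it is not true that every subcase reduces to a separating triangle, a Claim \ref{B2B} $4$-cycle, or a Claim \ref{B2A} violation. The critical configuration is an edge $x_1x_2$ of the separator with $x_1\in N_H(u_1)\setminus V(U')$ and $x_2\in N_H(w_2)\setminus V(U')$ (or $N_H(u_2)\setminus V(U')$), and also the boundary case $x_1=u_0$, $x_2\in N(w_2)$. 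There the resulting $4$-cycle $(u_1\,x_1\,x_2\,w_2)$ is a perfectly legal separating $4$-cycle (it contains $w_1$ in its interior), involves neither $v_1v_2$ nor $v_1v_4$, and its vertices are not two $u_i$'s at distance $\ge 2$ on $U$; none of your three forbidden configurations applies, so your traversal argument stalls precisely at the heart of the proof.

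The paper closes these cases not with local forbidden configurations but with the parity hypothesis of Proposition \ref{old5.4}(ii) together with Claim \ref{B0} (which encodes minimality of the counterexample): since the separating $4$-cycle $(u_1\,x_1\,x_2\,w_2)$ (respectively $(u_0\,u_1\,w_2\,x_2)$) has the interior vertex $w_1$, Claim \ref{B0} supplies chordless interior $u_1x_2$-paths of both parities inside it; one then needs a chordless interior $x_2v_3$-path $Q''$ avoiding $N_J[u_1]$ in the outer region (whose existence requires a second cutset argument, and it is only \emph{that} secondary argument which reduces to separating triangles and Claim \ref{B2A}), and concatenating $v_1u_1\cup Q'\cup Q''$ yields chordless interior $v_1v_3$-paths of both parities, contradicting the same-parity hypothesis. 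Your proposal never invokes Claim \ref{B0} or the parity assumption at this stage, and without them the contradiction cannot be completed; so the key idea of the paper's proof is missing, not merely deferred to routine case-checking.
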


 \begin{claimproof}
 Let $S = (V(U) \cup N_H[u_1] \cup N_H[u_2] \cup N_H[w_2]) - \{w_1\}$. 
Since $u_1v_3, w_2v_3, u_2v_3 \notin E(G)$, we know that $v_3 \notin S$.
 It suffices to show that there is a $w_1 v_3$-path in $H - S$.

 Let $S_1 = (N_H[u_1] - \{w_1\})$, and $S_2 = (N_H[w_2] -
\{w_1\}) \cup N_H[u_2] \cup \{u_4, u_5, \ldots, u_{k+1}\}$.  Then $S =
S_1 \cup S_2$.  Suppose $x \in S_1 \cap S_2$.  Since $U'$ has no
$w_1$-jumping chord in $H$, $x \notin V(U')$ and hence $x \in N_H(u_1)
\cap (N_H(w_2) \cup N_H(u_2))$.  But then either $(u_1 w_2 x)$ or $(u_1
u_2 x)$ is a separating triangle in $G$, a contradiction.  Hence $S_1
\cap S_2 = \emptyset$.

 Assume there is no $w_1v_3$-path in $H - S$. Then there is a minimal cutset
contained in $S$ separating $w_1$ and $v_3$ in $H$, which by Observation
\ref{A0} induces a chordless path $R$ starting on $u_0 u_1$ and ending
on $w_2 u_2 u_3 \ldots u_{k+1}$. 
 Since $U'$ has no $w_1$-jumping chord in $H$, there are no vertices of
$S_2$ on $u_0 u_1$, and no vertices of $S_1$ on $w_2 u_2 u_3 \ldots
u_{k+1}$.
 Hence, the first vertex of $R$ belongs to $S_1$ and the last to $S_2$. 
Let $x_1$ denote the last vertex of $R$ that belongs to $S_1$, and $x_2$
its immediate successor, which must belong to $S_2$.
 Since $U'$ has no $w_1$-jumping chord in $H$, we cannot have both $x_1,
x_2 \in V(U')$.

 Suppose $x_1 \in V(U')$, so $x_1 = u_0$ or $u_1$. Then $x_2 \in S_2 -
V(U') = (N_H(w_2) \cup N_H(u_2)) - V(U')$.  If $x_1 = u_1$ then $x_2 \in
S_1 \cap S_2$, a contradiction.  If $x_1 = u_0$ and $x_2 \in N(u_2)$
then $(v_1 u_2 x_2 u_0)$ contradicts Claim \ref{B2A}.  Thus, $x_1 = u_0$
and $x_2 \in N(w_2)$.
 Since $U'$ has no $w_1$-jumping chord in $H$, by Observation \ref{C0}
there is an interior $x_2v_3$-path in $I(u_0 x_2 w_2 u_2 u_3 \ldots
u_{k+1} v_3)$.
 Let $Q''$ be a shortest, hence chordless, such path.
 Since $I(u_0 u_1 w_2 x_2)$ has an interior vertex $w_1$, by Claim
\ref{B0} it has interior $u_1 x_2$-paths $Q_1'$ and $Q_2'$ whose lengths
have different parities.  But then $v_1 u_1 \cup Q_1' \cup Q''$ and $v_1
u_1 \cup Q_2' \cup Q''$ are chordless interior $v_1 v_3$-paths whose
lengths have different parities, a contradiction.


 Therefore, $x_1 \in S_1 - V(U') = N(u_1) - V(U')$.  We consider three
possibilities for $x_2$.  Cases (2) and (3) are shown at right in
Figure~\ref{fig-4c}.

 \smallskip\noindent
 (1) Suppose that $x_2 \in V(U')$, so that $x_2$ is a vertex of $w_2
u_2 u_3 \ldots u_{k+1}$.
 Since $x_1 u_1 \in E(G)$, we either get a separating triangle in $G$ if
$x_2 = w_2$ or $u_2$, or violate Claim \ref{B2A} if $x_2 = u_i$, $3 \le
i \le k+1$.

 \smallskip\noindent
 (2) Suppose that $x_2 \in N_H(w_2) - V(U')$.  Let $U'' = u_0 u_1 x_1
x_2 w_2 u_2 u_3 \ldots u_{k+1}$ and let $J = I(U'' \cup v_2v_3v_4)$.  If
$x_1$ is adjacent to a vertex $z$ of $w_2 u_2 u_3 \ldots u_{k+1}$ we get
a separating triangle if $z = w_2$ or $u_2$, or violate Claim \ref{B2A}
otherwise.  Also, $u_1 w_2, u_1 u_2 \notin E(J)$, and $U'$ has no
$w_1$-jumping chord in $H$.  Therefore, $U''$ has no $x_2$-jumping chord
in $J$.

 If there is an interior $x_2 v_3$-path in $J$ that avoids $N_J[u_1]$,
then we may take $Q''$ to be a shortest, hence chordless, such path. 
Then for any chordless interior $u_1 x_2$-path $Q'$ in $G' = I(u_1 w_2
x_2 x_1)$, $v_1 u_1 \cup Q' \cup Q''$ is a chordless interior $v_1
v_3$-path in $G$, so all such paths $Q'$ have length of the same parity.
 Since $G'$ has an interior vertex, $w_1$, this contradicts Claim \ref{B0}.

 Therefore, $S' = (V(U'') \cup N_J[u_1]) - \{x_2\}$ separates $x_2$ and
$v_3$ in $J$.  Let $S_1' = N_J[u_1]$ and $S_2' = \{w_2, u_2, u_3, \ldots,
u_{k+1}\}$, so that $S' = S_1' \cup S_2'$.  Because $U''$ has no
$x_2$-jumping chord in $J$, $S_1' \cap S_2' = \emptyset$.
 Applying Observation \ref{A0} to a minimal cutset contained in $S'$
separating $x_2$ and $v_3$, which induces a chordless path $R'$ which
starts on a vertex of $S_1' \cap V(U'')$ and ends on a vertex of $S_2'$,
we see that $R'$ has an edge $y_1y_2$ with $y_1 \in S_1'$, $y_2 \in
S_2'$.  Since $U''$ has no $x_2$-jumping chord in $J$, we cannot have
both $y_1, y_2 \in V(U'')$, so $y_1 \in N_J(u_1) - V(U'')$.  But then if
$y_2 = w_2$ or $u_2$ we have a separating triangle $(u_1 y_2 y_1)$, and
otherwise $y_1$ violates Claim \ref{B2A}.

 \smallskip\noindent
 (3) Suppose that $x_2 \in N_H(u_2) - V(U')$.  After modifying
$U''$ to be $u_0 u_1 x_1 x_2 u_2 u_3 \ldots u_{k+1}$, $G'$ to be $I(u_1
u_2 x_2 x_1)$, and $S_2'$ to be $\{u_2, u_3, \ldots, u_{k+1}\}$, the
proof is almost identical to (2) above.
 \smallskip

 So our assumption was incorrect, and there is a $w_1 v_3$-path in $H-S$.
 \end{claimproof}

 Let $Q$ be a $w_1v_3$-path as in Claim \ref{w1v3path} that is as short
as possible; then $Q$ is chordless.  As $Q$ contains no neighbor of
$v_1$, $u_1$, $u_2$ or $w_2$ except $w_1$, the paths $v_1 u_1 w_1 \cup
Q$ and $v_1 u_2 w_2 w_1 \cup Q$ are chordless interior $v_1 v_3$-paths
whose lengths have different parities, giving the final contradiction
that proves (ii).
 \end{proof} 

 \begin{corollary}\label{Z1}
 Let $G$ be a cyclically-$4$-edge-connected
cubic graph embedded in a surface, and let $C$ be a cycle of $G$ with a
fixed orientation bounding a closed disk $D$ in the surface.
 Suppose that precisely four edges touch $C$ from outside of $D$, at
vertices $u_1$, $u_2$, $u_3$, $u_4$ in that order around $C$, and let
$B_i = u_i C u_{i+1}$ (taking $u_5=u_1$).  If all $B_1B_3$-face chains
through $D$ have length of the same parity, then there is a $4$-cycle
face contained in $D$.
 \end{corollary}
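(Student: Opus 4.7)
The plan is to dualize the situation inside $D$ so that Proposition~\ref{old5.4} applies directly. Let $H$ be the subgraph of $G$ consisting of all vertices and edges lying in $\clo D$, regarded as a plane graph with outer boundary $C$. I will construct a plane graph $T$ whose interior vertices are the faces of $G$ contained in $D$ and whose outer boundary is a $4$-cycle $(v_1 v_2 v_3 v_4)$, with $v_i$ representing the arc $B_i$. For each edge of $H$ interior to $D$, I put an edge of $T$ between the two $G$-faces it separates; for each edge on $B_i$, I put an edge between $v_i$ and the unique inner $G$-face that contains that edge. Because $G$ is cubic, the inner faces of $T$ correspond exactly to the vertices of $H$: each interior vertex of $H$ contributes a triangle of three face-vertices; each non-$u$ vertex on $B_i$ contributes a triangle of two face-vertices together with $v_i$; and each $u_i$ contributes a triangle consisting of the unique inner $G$-face at $u_i$ together with $v_{i-1}$ and $v_i$. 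Hence $T$ is a near-triangulation with outer $4$-cycle $(v_1 v_2 v_3 v_4)$.

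I would next verify, using the cyclic-$4$-edge-connectivity of $G$, that $T$ is simple and has no separating triangles. A multi-edge of $T$ between two face-vertices, or between a face-vertex and some $v_i$, would, because everything relevant lies inside the disk $D$, yield a $2$-edge cut of $G$; and a separating triangle of $T$ would correspond to three pairwise-edge-sharing inner $G$-faces without a common vertex, giving a nontrivial $3$-edge cut of $G$. Both are forbidden by hypothesis.

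The key translation, again using cubicness of $G$, is that two inner $G$-faces share a boundary vertex iff they share a boundary edge, and an inner $G$-face shares a vertex with $B_i$ iff it contains an edge of $B_i$. Consequently the map sending a $B_1 B_3$-face chain $B_1, f_1, \ldots, f_{n-1}, B_3$ through $D$ to the path $v_1, f_1, \ldots, f_{n-1}, v_3$ in $T$ is a length-preserving bijection between such face chains and chordless interior $v_1 v_3$-paths in $T$, so the hypothesis of Corollary~\ref{Z1} becomes exactly the hypothesis of Proposition~\ref{old5.4}(ii). Since $D$ contains at least one face of $G$, $T$ has an interior vertex, and Proposition~\ref{old5.4}(ii) delivers an interior vertex of $T$ of degree $4$: a face $f$ of $G$ in $D$ whose boundary walk has length $4$. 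Cubicness together with the absence of bridges and multiple edges in $G$ forces this walk to be a $4$-cycle, so $f$ is the required $4$-cycle face. The main obstacle is the middle paragraph: the case analysis ruling out every potential multi-edge and separating triangle in $T$, particularly those involving the corner vertices $v_i$, must be carried out using the structure of $G$ around the four boundary edges of $D$.
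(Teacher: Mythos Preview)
Your approach is essentially the same as the paper's: form the dual-like plane graph on the inner faces together with four boundary vertices $v_i$ standing for the arcs $B_i$, observe that cubicness makes it a near-triangulation and cyclic-$4$-edge-connectivity makes it simple with no separating triangles, then apply Proposition~\ref{old5.4}(ii). The paper's proof is terser (it simply asserts simplicity and the absence of separating triangles from cyclic-$4$-edge-connectivity), while you spell out more of the dictionary between face chains and chordless interior paths, but the argument is the same.
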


 \begin{proof}
 Let $H$ be the intersection graph of the sets $B_1, B_2, B_3, B_4$ and
the closures of all faces inside $D$, and for each $i$ let $g_i$ denote
the face outside $D$ intersecting $D$ along $B_i$.
 Essentially $H$ is a subgraph of the dual of $G$, but we replace each
face $g_i$ by $B_i = \clo{g_i} \cap D$ to make sure that there are four
distinct vertices representing $g_1$, $g_2$, $g_3$ and $g_4$, even if
some of these faces are actually the same.  Since $G$ is cubic, $H$ is a
near-triangulation, with outer $4$-cycle $(B_1 B_2 B_3 B_4)$.  Since $G$
is cyclically-$4$-edge-connected, $H$ is simple and has no separating
triangles.  Apply Proposition \ref{old5.4} (ii) to $H$.
 \end{proof}

 \section{Main theorem}\label{Main}

 We are now ready to prove our main theorem.  The main step is a
structural result, Theorem \ref{Structure}.

 \begin{lemma}\label{ppfacering}
 Suppose $\Psi$ is a projective-planar embedding of a $3$-connected
cubic graph, with $\rho(\Psi) = m \ge 2$.  Then for any noncontractible
simple closed curve $\Ga$ with $|\Ga \cap G| = m$, the faces traversed
by $\Ga$, in their order along $\Ga$, form a noncontractible elementary
face ring.
 \end{lemma}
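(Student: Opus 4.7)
Given $\Ga$, slightly perturb it so it meets $G$ transversally only in edge interiors; it then crosses edges $e_1,\ldots,e_m$ at points $p_1,\ldots,p_m$, with subarc $\gamma_k$ from $p_{k-1}$ to $p_k$ (indices mod $m$) lying in $\clo{f_k}$. Since $G$ is $3$-connected and $\rho(\Psi)\ge 2$, by the Robertson--Vitray criterion $\Psi$ is closed $2$-cell, so each $\clo{f_k}$ is a closed topological disk. The unifying strategy is: whenever a prospective face-ring property fails, use the failure to draw a ``shortcut'' arc $\alpha$ (either inside a face or crossing one extra edge near a putative shared boundary point), then splice $\alpha^{\pm1}$ with the two subarcs of $\Ga$ obtained by cutting at $\alpha$'s endpoints. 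In the group $\pi_1={\Bbb Z}/2$ of the projective plane, the product of the two resulting closed curves equals $[\Ga]\ne 1$, so one of them is noncontractible, while careful intersection bookkeeping shows both have strictly fewer than $m$ crossings with $G$, contradicting $\rho(\Psi)=m$.

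For the face-ring conditions this goes as follows. Distinctness of the $f_k$: if $m=2$, $f_1=f_2$ would put $\Ga$ inside a disk; if $m\ge 3$ and $f_i=f_j$ with $i<j$, choose $\alpha\subset\clo{f_i}$ from $p_{i-1}$ to $p_j$ missing the other subarcs of $\Ga$ there; at the endpoints of $\alpha$ the curve $\Ga_A\cdot\alpha^{-1}$ touches $G$ on only one side and can be pushed off, giving $|\Ga_A\cdot\alpha^{-1}\cap G|\le j-i<m$, while $|\alpha\cdot\Ga_B\cap G|=m-j+i<m$. Emptiness of $\bdy f_i\cap\bdy f_j$ for non-adjacent $i,j$ (vacuous unless $m\ge 4$), and pairwise disjointness of the sets $\bdy f_i\cap\bdy f_{i+1}$, reduce to similar shortcuts; in the one-off case $m=3$ with three consecutive faces meeting at a vertex $v$, Observation~\ref{3conn3rep} forces the three pairwise-shared edges to be precisely the three edges at $v$, so $\Ga$ becomes a small loop around $v$ and hence is contractible. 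Nonemptiness of $\bdy f_i\cap\bdy f_{i+1}$ is delivered by $p_i$, and noncontractibility of the ring is inherited from $\Ga$.

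It remains to verify the ring is elementary. For $m\ge 3$ the embedding is $3$-representative, so Observation~\ref{3conn3rep} immediately gives that any two faces touch at most once. The main obstacle I anticipate is the case $m=2$: suppose $f_1$ and $f_2$ touch $k\ge 3$ times (necessarily along $k$ vertex-disjoint edges, from the closed $2$-cell structure at each vertex). Then $X:=\clo{f_1}\cup\clo{f_2}$ is two disks glued along $k$ disjoint arcs, so has Euler characteristic $2-k$, and its complementary closed region $Y$ in the projective plane has Euler characteristic $k-1$ and $k$ boundary components, which a short case analysis forces to be $k-1$ disks together with a single M\"obius strip. Each of the $k$ boundary cycles of $X$ therefore bounds a disk on one of its two sides in the projective plane (directly for the $k-1$ bounding disks in $Y$; and for the remaining boundary, because the complement of the M\"obius strip in the projective plane is a disk), so every such cycle is nullhomotopic there. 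Hence the inclusion $X\hookrightarrow$ projective plane sends a free generating set of $\pi_1(X)$ (loops around two of the three boundary components of the pants) to $0$, and so is trivial on fundamental groups. But $\Ga\subseteq X$ is noncontractible, a contradiction, so $f_1$ and $f_2$ touch exactly twice.
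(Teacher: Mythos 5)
Most of your argument is sound and essentially elaborates the paper's (terser) proof: for $m\ge 3$ elementarity comes from Observation \ref{3conn3rep} exactly as in the paper, and your ``shortcut arc plus ${\Bbb Z}/2$ bookkeeping'' is a legitimate way to make precise the paper's assertion that a violation of the ring conditions yields a noncontractible curve meeting $G$ fewer than $m$ times. The genuine gap is in your $m=2$ case. You assert that $X=\clo{f_1}\cup\clo{f_2}$ (two disks glued along $k$ disjoint edges) has $k$ boundary circles, so that $Y$ has $k$ boundary components and must be $k-1$ disks plus a M\"obius strip. But the number of boundary circles of such a gluing is $k$ only when $X$ is orientable (planar); inside the projective plane the other possibility is that $X$ contains the crosscap, in which case $X$ is a projective plane minus $k-1$ open disks, with only $k-1$ boundary circles and with $Y$ a union of $k-1$ disks and no M\"obius strip. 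And this second possibility is exactly the one that occurs here: $\Ga$ lies in the interior of $X$ and is noncontractible, hence one-sided, so $X$ is nonorientable. (Already for the legitimate case $k=2$ your boundary count fails: there $X$ is a M\"obius band with one boundary circle, not an annulus.) In the nonorientable case your concluding step also breaks down: the boundary loops of a M\"obius band with holes do not generate $\pi_1(X)$ (the boundary is twice the core), so ``all boundary circles bound disks'' does not make the inclusion $\pi_1(X)\to\pi_1$ of the projective plane trivial, and indeed there is no purely topological contradiction --- $X\cong$ projective plane minus $k-1$ disks with $k\ge 3$ is a perfectly realizable configuration; what excludes it is graph connectivity, which your $m=2$ argument never uses.

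The fix is the paper's argument (or your own shortcut idea, applied here): if $\bdy f_1\cap\bdy f_2$ has $k\ge 3$ components, pick three of the shared edges and draw the three simple closed curves through $f_1\cup f_2$ each crossing exactly two of them; in $\pi_1={\Bbb Z}/2$ the class of one such curve is the product of the classes of the other two, so at least one of them is contractible. That contractible curve meets $G$ in exactly two points (interior points of two shared edges, or equivalently it can be routed through two vertices), and since it bounds a disk with vertices of $G$ on both sides it exhibits a $2$-edge-cut (equivalently a $2$-vertex cutset) in the cubic graph $G$, contradicting $3$-connectivity. This is precisely how the paper disposes of the $m=2$ case.
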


 \begin{proof}
 Label the faces along $\Ga$ as $f_1$, $f_2$, $\ldots$, $f_m$. 
Interpreting subscripts modulo $m$, clearly $f_i$ and $f_{i+1}$ touch
for each $i$.  If $f_i$ touches $f_j$ for some $j \ne i-1$, $i$ or
$i+1$, then we can find a noncontractible closed curve intersecting $G$
in fewer points than $\Ga$, a contradiction.  Thus, we have a
noncontractible face ring.

 If $m \ge 3$ then the face ring is elementary by Observation
\ref{3conn3rep}, so assume that $m = 2$.  
 If $\bdy f_1 \cap \bdy f_2$ has three or more components, then we can
find a contractible simple closed curve lying in $f_1 \cup f_2$ cutting
$G$ at exactly two vertices that form a cutset in $G$, contradicting
$3$-connectivity.  Thus, the face ring is elementary.
 \end{proof}

 \begin{theorem}\label{Structure}
 If $G$ is a cyclically-$4$-edge-connected cubic graph with a
$2$-representative embedding $\Psi$ in the projective plane, then the
embedding has a $4$-cycle face or a noncontractible elementary face ring
of odd length or both.
 \end{theorem}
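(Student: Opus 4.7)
The plan is to argue by contradiction: suppose $\Psi$ has no $4$-cycle face and no noncontractible elementary face ring of odd length. By Lemma \ref{ppfacering} applied to any noncontractible simple closed curve $\Gamma$ with $|\Gamma \cap G| = m := \rho(\Psi) \geq 2$ minimum, we obtain a noncontractible elementary face ring $\F = (f_1, \ldots, f_m)$; by the assumption, $m$ must be even. The closure $N(\F) = \bigcup_i \overline{f_i}$ is a M\"obius strip in the projective plane (since the neighborhood of a noncontractible curve in a nonorientable surface is always one-sided), its complement is a closed disk $D$, and the common boundary $C = \partial N(\F) = \partial D$ is a simple cycle in $G$. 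Each touching set $e_i = \overline{\partial f_i \cap \partial f_{i+1}}$ is a single edge (by cubicness and $3$-connectivity), appearing as a chord of $C$ that lies in $N(\F)$, and the $2m$ endpoints of these chords are the ``corners'' of $C$.

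For $m = 2$, the two chords $e_1, e_2$ provide exactly four corner vertices on $C$ which serve as $u_1, u_2, u_3, u_4$ in Corollary \ref{Z1}, with the two chords themselves supplying the required four outside-edge incidences. The absence of a $4$-cycle face in $D$ then forces, via Corollary \ref{Z1}, that $B_1 B_3$-face chains through $D$ of both parities exist. An odd-length such chain corresponds to an odd-length noncontractible cycle in the dual graph $G^*$; applying a minimality/shortcut argument analogous to the proof of Lemma \ref{ppfacering} to a shortest odd noncontractible cycle in $G^*$ yields an odd-length noncontractible elementary face ring in $G$, contradicting our assumption.

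For $m \geq 4$ even, the minimality of $m$ imposes the strong structural constraint that any face $g \in D$ adjacent to two non-adjacent ring faces $f_i, f_j$ must satisfy $|i - j| \equiv 2 \pmod m$: otherwise one of the two ``shortcut rings'' obtained by replacing a sub-ring of $\F$ by $f_i, g, f_j$ would be a noncontractible cycle in $G^*$ of length less than $m$, contradicting $m = \rho(\Psi)$. This rigidity, together with the no-$4$-cycle-face assumption and cubicness, should allow the identification of a sub-disk $D' \subseteq D$ bounded by a simple cycle $C'$ with exactly four outside-edge incidences; applying Corollary \ref{Z1} to $(C', D')$ then yields a $4$-cycle face in $D'$, the desired contradiction. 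The main technical obstacle, which I expect to be the heart of the proof, is constructing $D'$ and $C'$ explicitly in the $m \geq 4$ case, exploiting the rigidity constraints (possibly iteratively, by reducing to an $m = 2$-like configuration) and verifying Corollary \ref{Z1}'s parity hypothesis on $B_1 B_3$-face chains through $D'$.
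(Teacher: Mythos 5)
Your overall framework is the paper's: take the ring $(f_1,\dots,f_m)$ of length $m=\rho(\Psi)$ supplied by Lemma \ref{ppfacering}, note that $m$ is even under the contradiction hypothesis, pass to the M\"obius strip $\bigcup_i\overline{f_i}$ and the complementary closed disk $D$, and play Corollary \ref{Z1} against the parity of noncontractible elementary face rings. Your $m=2$ case is essentially the paper's argument run in contrapositive form, but the finishing step is shaky as stated: from an odd $B_1B_3$-face chain through $D$ you want an odd noncontractible \emph{elementary} ring, and you propose to obtain elementarity by taking a shortest odd noncontractible dual cycle and shortcutting as in Lemma \ref{ppfacering}. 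Shortcutting at a non-consecutive touching splits a ring into two rings whose lengths sum to the original length plus two, so when the original length is odd the two pieces have opposite parities and the noncontractible piece may be the even one; minimality among \emph{odd} noncontractible dual cycles therefore does not force elementarity. The repair is direct and is what the paper does: the interior faces of the chain lie in $D$, so Observation \ref{3conn} together with $3$-connectivity shows the chain plus $f_1$ is already an elementary noncontractible ring of the same length; equivalently, assume all rings even, conclude all $B_1B_3$-chains are even, and let Corollary \ref{Z1} produce the forbidden $4$-cycle face.

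The genuine gap is the case $m\ge 4$, which you leave at the level of ``the rigidity should allow the identification of a sub-disk $D'$'' and explicitly defer as the expected heart of the proof. That is precisely where almost all of the work lies, and your sketch supplies none of it. In the paper, the boundary cycle $L$ of $D$ is cut into $4n$ segments $L_i$ of lengths $n_i$, and a long cascade of claims is proved: a face of $D$ meeting $L_i$ and $L_j$ forces the cyclic distance $d(i,j)\le 2$ (this segment-level statement, which keeps track of whether touchings occur on antipodal sides of the strip, is what $\rho$-minimality actually gives; your ring-level condition ``$|i-j|\equiv 2 \pmod m$'' is both imprecise and unproved); every face of $D$ meets $L$ in a single arc (this needs cyclic $4$-edge-connectivity and a further application of Corollary \ref{Z1}, not just minimality); and a family of parity constraints on the $n_i$ (for instance, $n_{i-1}>1$ and $n_{i+1}>1$ force $n_i$ odd, while $n_{i-1}>1$ and $n_{i+1}=1$ force $n_i$ even). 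These constraints eliminate $\rho\ge 6$ and pin down $\rho=4$ with an essentially unique boundary pattern $n_1=3$, $n_3=n_4=n_6=n_7=2$, $n_2=n_5=n_8=1$, after which an explicit planar $4$-edge-cut is constructed and Corollary \ref{Z1} is applied one last time. None of this is routine or follows formally from the rigidity you assert, so as it stands your proposal is a correct strategy outline for Theorem \ref{Structure}, not a proof.
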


 \begin{proof}
 \startclaim
 Assume for a contradiction that $G$ has neither a $4$-cycle face nor a
noncontractible elementary face ring of odd length.
 By Lemma \ref{ppfacering}, $\Psi$ has a noncontractible elementary face
ring of length $\rho(\Psi)$, so $\rho(\Psi) = 2n$ for some $n \ge 1$. 
Let this face ring be $\F = (f_1, f_2, \ldots, f_{2n})$.
Subscripts $i$ for $f_i$ are to be interpreted modulo $2n$.

 Since $G$ is cubic and $3$-connected, if $n=1$ then each component of
$\bdy f_1 \cap \bdy f_2$ is a single edge.  Similarly, if $n \ge 2$ then
each $\bdy f_i \cap \bdy f_{i+1}$ has one component which is a single
edge.
 Therefore, $F = \bigcup_{i=1}^{2n} \clo {f_i}$ is a closed M\"{o}bius
strip bounded by a cycle $L$.  $L$ contains distinct vertices $v_1, v_2,
\ldots, v_{4n}$ in that order such that $\bdy f_{i-1} \cap \bdy f_i$ is
the edge $v_i v_{2n+i}$.  Let $L_i = v_i L v_{i+1}$ (subscripts modulo
$4n$), so that the boundary of $f_i$ is $L_i \cup L_{2n+i} \cup \{v_i
v_{2n+1}, v_{i+1} v_{2n+i+1}\}$. Subscripts $i$ for $v_i$, $L_i$ and
related objects are to be interpreted modulo $4n$.
 Removing the interior of $F$ from the projective plane leaves a closed
disk $D$, which is the union of the closures of the faces not in $\F$,
and which contains all vertices of $G$.  We assume that $L$ goes around
$D$ clockwise, and all cycles contained in $D$ will also be oriented
clockwise.

 Suppose $\rho(\Psi) = 2$, so that $n = 1$.  Every $L_1 L_3$-face chain
through $D$ extends to an noncontractible elementary face ring using
Observation \ref{3conn}, and is therefore of even length.  But then by
Corollary \ref{Z1} there is a $4$-cycle face contained in $D$,
contradicting the fact that $G$ has no $4$-cycles.

 Thus, $\rho(\Psi) \ge 4$, so that $n \ge 2$.
 By Observation \ref{3conn3rep}, every face ring is elementary.  Thus,
every noncontractible face ring has even length.
 For each $i$, $1 \le i \le 4n$, let $\D_i$ be the set of faces in
$D$ whose closures intersect $L_i$, and $\D = \bigcup_{i=1}^{4n} \D_i$. 
 If $i, j \in \{1, 2, \ldots, 4n\}$, let $d(i,j)$ denote the distance
between $i$ and $j$ in the cyclic sequence $(1, 2, \ldots, 4n)$.

 We proceed by proving a sequence of claims.  Note that we implicitly
use the fact that $\rho \ge 4$.  In particular, all face rings we
contruct are valid if $\rho \ge 4$.

 \begin{claim}\label{D1}
 Since $G$ is cubic and $3$-connected and since $\rho
\ge 3$, for any two faces $f$ and $g$, $\bdy f \cap \bdy g$ is either
empty or a single edge.
 \claimproofblock
 \end{claim}

 \begin{claim}\label{D2}
 If $d(i, j) \ge 3$ then $\D_i \cap \D_j = \emptyset$.
 \end{claim}

 \begin{claimproof}
 Suppose not.  Without loss of generality we may assume that $i = 1$ and
$4 \le j \le 2n+1$.  Let $g \in \D_i \cap \D_j$.  We may construct a
noncontractible simple closed curve $\Ga$ passing through $f_1, g, f_j,
f_{j+1}, \ldots, f_{2n+1} = f_1$ and intersecting $G$ at $2n + 3 -j < 2n
= \rho$ points, a contradiction.
 \end{claimproof}

 \begin{claim}\label{D3}
 For every $d \in \D$, $\bdy d \cap L$ has exactly one component,
which is a subpath of $L$ with at least one edge.
 \end{claim}

 \begin{claimproof}
 Any component of $\bdy d \cap L$ cannot be a single vertex because
$G$ has no vertices of degree $4$ or more, so it is a subpath of $L$
with at least one edge.

 Suppose $\bdy d \cap L$ has two distinct components, $C_1$ and $C_2$. 
Suppose $C_1$ intersects $L_i$ and $C_2$ intersects $L_j$ where we
choose $i$ and $j$ so that $d(i,j)$ is as small as possible.  By Claim
\ref{D2}, $d(i,j) \le 2$.  If $i = j$ then $G$ has a $2$-edge-cut, and
if $d(i,j)=1$ then $G$ has a nontrivial $3$-edge-cut, contradicting the
fact that $G$ is cyclically-$4$-edge-connected.

 Suppose that $d(i,j) = 2$.  Without loss of generality assume that $i =
1$ and $j = 3$.  Using Claim \ref{D1}, suppose that $C_1 \cap L_1 = \bdy
d \cap \bdy f_1$ is the edge $x_1y_1$ where $x_1, y_1$ occur on that
order along $L_1$, and $C_2 \cap L_3 = \bdy d \cap \bdy f_3$ is the edge
$y_2x_2$ where $y_2, x_2$ occur in that order along $L_3$.  Since $G$ is
cubic, $x_1 \ne v_1$, $y_1 \ne v_2$, $y_2 \ne v_3$ and $x_2 \ne v_4$. 
Let $D'$ be the closed disk bounded by the cycle $y_1 (\bdy d) y_2 \cup
y_1 L y_2$.  Let $B_1 = y_1 L v_2$ and $B_3 = v_3 L y_2$.  Then for
every $B_1 B_3$-face chain $(B_1, g_1, g_2, \ldots, g_{k-1}, B_3)$ of
length $k$ through $D'$, we have a noncontractible face ring $(f_1, g_1,
g_2, \ldots, g_{k-1}, f_3, f_4, \ldots, f_{2n})$, which must have even
length, so that $k$ is always even.  Therefore, by Corollary \ref{Z1}
there is a $4$-cycle face contained in $D'$, contradicting the fact that
$G$ has no $4$-cycles.
 \end{claimproof}

 By Claim \ref{D3}, the elements of $\D$ can be cyclically ordered along
$L$ according to their intersection with $L$.  Within each $\D_i$ we may
linear order the elements of $\D_i$ along $L_i$ as $d_{i,1}, d_{i,2},
\ldots, d_{i,n_i}$, where $d_{i,1}$ contains $v_i$ and $d_{i,n_i}$
contains $v_{i+1}$.  By Claim \ref{D1}, $n_i$ is the length of $L_i$.
 Possibly $n_i=1$.  Note that $d_{i,n_i} = d_{i+1,1}$.  The following
is immediate.

 \begin{claim}\label{D3A}
 If $1 < j < n_i$ and $i \ne k$ then $d_{i,j}$ and $f_k$ do not touch.
 \claimproofblock
 \end{claim}

 \begin{claim}\label{D4}
 We do not have $n_i = n_{i+1} = 1$ for any $i$.
 \end{claim}

 \begin{claimproof}
 If $n_i = n_{i+1} = 1$ then $d_{i-1,n_{i-1}} = d_{i,1} = d_{i+1,1} =
d_{i+2,1}$, violating Claim \ref{D2}.
 \end{claimproof}

 \begin{claim}\label{D5A}
 Since $G$ has no nontrivial $3$-edge-cut, if $j \le k-2$ then
$d_{i,j}$ and $d_{i,k}$ do not touch.
 \claimproofblock
 \end{claim}

 \begin{claim}\label{D5B}
 If $j < n_i$ and $k > 1$ then $d_{i,j}$ and $d_{i+1,k}$ do not touch.
 \end{claim}

 \begin{claimproof}
 Without loss of generality assume that $i = 1$.  Suppose that $j <
n_1$, $k > 1$, and $d_{1,j}$ and $d_{2,k}$ touch.  Let $m$ be the
largest $m$ so that $d_{2,n_2} = d_{m,1}$; by Claim \ref{D4}, $m = 3$ or
$4$.  Let $\bdy d_{1,j} \cap L = x_1 L y_1$ and $\bdy d_{2,k} \cap L =
y_2 L x_2$.  Then $y_1$ and $y_2$ are internal vertices of $L_1$ and
$L_2$, respectively.  Let $y_3$ be the first vertex of $\bdy d_{2,k}$
encountered when travelling clockwise along $\bdy d_{1,j}$ from $y_1$.  Let
$D'$ be the closed disk bounded by the cycle $y_1 (\bdy d_{1,j}) y_3 \cup y_3
(\bdy d_{2,k}) y_2 \cup y_1Ly_2$.  Let $B_1 = y_1 L v_2$ and $B_3 = y_3 (\bdy
d_{2,k}) y_2$.
 Then for every $B_1 B_3$-face chain $(B_1, g_1, g_2, \ldots, g_{l-1},
B_3)$ of length $l$ through $D'$, by Claims \ref{D3A} and \ref{D5A} we
have a noncontractible face ring $(f_1, g_1, g_2, \ldots, g_{l-1},
d_{2,k}, d_{2,k+1}, \ldots, d_{2,n_2}=d_{m,1}, f_m, f_{m+1}, \ldots,
f_{2n})$, which must have even length, so that $l$ always has the same
parity.  Therefore, by Corollary \ref{Z1} there is a $4$-cycle face
contained in $D'$, contradicting the fact that $G$ has no $4$-cycles.
 \end{claimproof}

 \begin{claim}\label{D5C}
 If $d(i,j) \ge 4$ then no face in $\D_i$ touches a face in $\D_j$.
 \end{claim}

 \begin{claimproof}
 This is similar to the proof of Claim \ref{D2}.
 \end{claimproof}

 \begin{claim}\label{D6}
 If $n_{i-1} > 1$ and $n_{i+1} > 1$ then $n_i$ is odd.
 Equivalently, if $n_i$ is even then $n_{i-1} = 1$ or $n_{i+1} =1$.
 \end{claim}

 \begin{claimproof}
 Without loss of generality assume that $i = 2$.  If $n_1 > 1$ and $n_3
> 1$ then, by Claims \ref{D3A} and \ref{D5A}, $(f_1, d_{1,n_1} =
d_{2,1}, d_{2,2}, d_{2,3}, \allowbreak \ldots, \allowbreak d_{2,n_2} =
d_{3,1}, f_3, f_4, \ldots, f_{2n})$ is a noncontractible face ring,
which must have even length, so that $n_2$ must be odd.
 \end{claimproof}

 \begin{claim}\label{D7}
 If $n_{i-1} > 1$ and $n_{i+1} = 1$, or $n_{i-1} = 1$ and $n_{i+1}
> 1$, then $n_i$ is even.
 \end{claim}

 \begin{claimproof}
 Without loss of generality assume that $i=2$, where $n_1 > 1$ and $n_3
= 1$.  By Claim \ref{D4}, $n_4 > 1$.  Thus, and by Claims \ref{D3A} and
\ref{D5A}, $(f_1, d_{1,n_1} = d_{2,1}, d_{2,2}, d_{2,3}, \ldots,
d_{2,n_2}=d_{3,1}=d_{4,1}, f_4, f_5, \ldots, f_{2n})$ is a
noncontractible face ring, which must have even length, so that $n_2$
must be even.
 \end{claimproof}

 \begin{claim}\label{D8}
 If $n_{i-1} = n_{i+1} = 1$ then $n_i$ is odd and $n_i \ge 3$.
 \end{claim}

 \begin{claimproof}
 Without loss of generality assume that $i = 3$, and $n_2=n_4 =1$.  By
Claim \ref{D4}, $n_1 > 1$ and $n_5 > 1$.  Thus, and by Claims \ref{D3A}
and \ref{D5A}, $(f_1, d_{1,n_1} = d_{2,1} = d_{3,1}, d_{3,2}, d_{3,3},
\ldots, d_{3,n_3} = d_{4,1} = d_{5,1}, f_5, f_6, \ldots, f_{2n})$ is a
noncontractible face ring, which must have even length, so that $n_3$
must be odd.  By Claim \ref{D4}, $n_3 \ge 3$.
 \end{claimproof}

 \begin{claim}\label{D9}
 If $n_i$ is even then either $n_{i-1} = n_{i+2} = 1$ and
$n_{i+1}$ is even, or $n_{i+1} = n_{i-2} = 1$ and $n_{i-1}$ is even.
 \end{claim}

 \begin{claimproof}
 Without loss of generality assume that $i = 3$.  By Claim \ref{D6},
$n_2 = 1$ or $n_4 = 1$: again without loss of generality assume that
$n_2 = 1$.  By Claim \ref{D4}, $n_1 > 1$, and by Claim \ref{D8}, $n_4 >
1$.  If $n_5 > 1$, then $n_4$ is odd by Claim \ref{D6}, but then $(f_1,
d_{1,n_1}=d_{2,1}=d_{3,1}, d_{3,2}, d_{3,3}, \ldots, d_{3,n_3} =
d_{4,1}, d_{4,2}, d_{4,3}, \ldots, d_{4,n_4}=d_{5,1}, f_5, f_6, \ldots,
f_{2n})$ is a noncontractible face ring (by Claims \ref{D3A}, \ref{D5A}
and \ref{D5B}) of odd length, a contradiction.  Therefore $n_5 =1$, and
from Claim \ref{D7}, $n_4$ is even.
 \end{claimproof}

 \begin{claim}\label{D10}
 If $n_i = 1$ then either $n_{i+1}$ and $n_{i+2}$ are even
and $n_{i+3} = 1$, or $n_{i+1}$ is an odd number at least $3$ and
$n_{i+2}=1$.
 \end{claim}

 \begin{claimproof}
 Without loss of generality assume that $i=1$.  If $n_2$ is even then by
Claim \ref{D9} we know that $n_3$ is even and $n_4 = 1$.  If $n_2$ is
odd then by Claim \ref{D7} $n_3=1$.
 \end{claimproof}

 \begin{claim}\label{D11}
 We cannot have $n_i$, $n_{i+1}$, $n_{i+2}$, $n_{i+3}$ all greater than
$1$.
 \end{claim}

 \begin{claimproof}
 Without loss of generality assume that $i = 1$.  Suppose that $n_1,
n_2, n_3, n_4 > 1$.  By Claim \ref{D6}, $n_2$ and $n_3$ are odd.  By
Claims \ref{D3A}, \ref{D5A} and \ref{D5B}, $(f_1, d_{1,n_1}= d_{2,1},
d_{2,2}, d_{2,3}, \ldots, d_{2,n_2}=d_{3,1}, d_{3,2}, d_{3,3}, \ldots,
d_{3,n_3} = d_{4,1}, f_4, f_5, \ldots, f_{2n})$ is a noncontractible
face ring of odd length, a contradiction.
 \end{claimproof}

 Suppose now that no $n_i$ is even.  By Claim \ref{D11}, some $n_i$ is
equal to $1$, say $n_1 = 1$.  Since no $n_i$ is even, Claim \ref{D10}
implies that $n_2$, $n_4$, $n_6$, $\ldots$, $n_{4n}$ are all odd numbers
at least $3$, while $n_3 = n_5 = \ldots = n_{4n-1} = 1$.  In particular,
$n_1 = n_{2n+1} = 1$, which means that $f_1$ is a $4$-cycle face, a
contradiction.

 Therefore, some $n_i$ is even, and by Claim \ref{D9} either $n_{i-1} =
1$ or $n_{i+1} = 1$.  Without loss of generality we may assume that
$n_3$ is even and $n_2 = 1$.  By Claim \ref{D4}, $n_1 > 1$, and by Claim
\ref{D10}, $n_4$ is even and $n_5 = 1$, so that, by Claim \ref{D4}
again, $n_6 > 1$.  By Claims \ref{D3A}, \ref{D5A} and \ref{D5B}, $(f_1,
d_{1,n_1} = d_{2,1} = d_{3,1}, d_{3,2}, d_{3,3}, \ldots, d_{3,n_3} =
d_{4,1}, d_{4,2}, d_{4,3}, \ldots, d_{4,n_4} = d_{5,1} = d_{6,1}, f_6,
f_7, \ldots, f_{2n})$ is a noncontractible face ring of odd length, a
contradiction, if $\rho \ge 6$.

 Therefore, $\rho = 4$.  To satisfy Claims \ref{D10} and \ref{D11}, and
since some $n_i$ is even, we must have the following situation, or one
rotationally equivalent to it:
 $n_2 = 1$, $n_3$ even, $n_4$ even, $n_5 = 1$, $n_6$ even, $n_7$ even,
$n_8 = 1$, and $n_1$ odd and at least $3$.

 By Lemma \ref{ppfacering}, $(f_1, d_{1,n_1} = d_{2,1} = d_{3,1}, f_3,
f_4)$ is a noncontractible face ring.  The union of the closures of
these faces is a closed M\"obius strip with boundary cycle $L'$, which
we may divide into $8$ subpaths $L_i'$ in the same way that $L$ is
divided into subpaths $L_i$.  We see that $L_4, L_8 \subseteq L'$ and we
may orient $L'$ and label its subpaths so that $L_4' = L_4$, $L_8' =
L_8$, and the orientation of $L'$ agrees with that of $L$ on these two
subpaths.  If $n_i'$ is the length of $L_i'$ we see that $n_1' = n_1-1$
is even, $n_2'$ is unknown, $n_3' = n_3-1$ is odd, $n_4' = n_4$ is even,
$n_5'=n_5+1=2$, $n_6'=1$, $n_7'=n_7+1$ is odd, and $n_8'=n_8=1$.  By
Claim \ref{D9}, $n_3' = 1$ so that $n_3 = 2$.  By symmetry, $n_7 = 2$
also.

 In the same way, $(f_4, d_{4,n_4}=d_{5,1}=d_{6,1}, f_6, f_7)$ is a
noncontractible face ring, and we get a closed M\"obius strip whose
boundary cycle is divided into $8$ subpaths of lengths $n_i''$, $1 \le i
\le 8$, where we see that $n_1'' = 1$, $n_2'' = n_2+1 = 2$, $n_3'' = n_3
= 2$, $n_4'' = n_4-1$ is odd, $n_5''$ is unknown, $n_6'' = n_6-1$ is
odd, $n_7''=n_7 =2$, and $n_8''=n_8+1 = 2$.  By Claim \ref{D9}, $n_4'' =
n_6'' = 1$ so that $n_4 = n_6 = 2$.

  Now all even numbers $n_i$ ($n_3$, $n_4$, $n_6$, $n_7$) have been
shown to equal $2$.  By the same reasoning, all even numbers $n_i'$ must
be $2$.  In particular, $n_1' = n_1-1 = 2$, so that $n_1 = 3$.


 Since $n_1 = 3$ and $n_3=n_4=n_6=n_7=2$ we may write $L = (v_1 w_{1,1}
w_{1,2} v_2 v_3 w_3 v_4 w_4 v_5 v_6 w_6 v_7 \ab w_7 v_8)$.  For each $u
\in \{w_{1,1}, w_{1,2}, w_3, w_4, w_6, w_7\}$, let $u'$ denote the
neighbor of $u$ to which $u$ is joined by an edge that is not an edge of
$L$.  By Claim \ref{D3}, no such $u'$ is a vertex of $L$.

 If $d_{3,2}$ does not touch $d_{1,2}$ then  $(f_4=f_8, d_{8,1} =
d_{1,1}, d_{1,2}, d_{1,3} = d_{2,1} = d_{3,1}, d_{3,2}=d_{4,1})$ is a
noncontractible face ring of length $5$, which is odd, a contradiction. 
So $d_{3,2}$ touches $d_{1,2}$.

 Write $\bdy d_{1,2} \cap d_{3,2} = x_2y_2$ where $x_2$ precedes $y_2$
in the clockwise order around $\bdy d_{3,2}$.  By Claim \ref{D3}, $x_2$
and $y_2$ are not vertices of $L$.  If $w_{1,2}'$, $w_3'$ and $y_2$ are
not all equal, then $\{w_{1,2} w_{1,2}', w_3 w_3', x_2 y_2\}$ is a
nontrivial $3$-edge-cut, a contradiction.
 Therefore, $y_2 = w_{1,2}' = w_3'$ is a vertex shared by $d_{1,2}$,
$d_{1,3} = d_{2,1} = d_{3,1}$, and $d_{3,2}$.  Similarly, there is a
vertex $y_1 = w_{1,1}' = w_7'$ shared by $d_{1,2}$, $d_{1,1} = d_{8,1} =
d_{7,2}$, and $d_{7,1}$, with neighbor $x_1 \ne w_{1,1}, w_7$, where
neither $x_1$ nor $y_1$ is on $L$.
 Since $d_{1,2}$ is not a $3$- or $4$-cycle face, $y_1 \ne y_2$ and $y_1
y_2 \notin E(G)$, so that $x_1 \ne y_2$ and $x_2 \ne y_1$.

 Since $d_{5,1}$ is not a $4$-cycle face, $w_4 w_6 \notin E(G)$.  Since
the neighbors of $y_1$ are $x_1$, $w_7$ and $w_{1,1}$, $y_1$ is adjacent
to neither $w_4$ nor $w_6$; similarly, $y_2$ is adjacent to neither
$w_4$ nor $w_6$.  Thus, $\{y_1, y_2, w_4, w_6\}$ is an independent set
and $x_1, x_2, w_4', w_6'$ do not belong to this set.

 Now there is a contractible simple closed curve intersecting the
embedding at precisely four points, one interior point of each of the
four edges  $y_1 x_1$, $y_2 x_2$, $w_4 w_4'$, $w_6 w_6'$, in that order.
 This curve bounds an open disk $\De$ containing $x_1$, $x_2$, $w_4'$
and $w_6'$.  Let $H$ be the subgraph of $G$ induced by $V(G) \cap \De$. 
Since $G$ is cyclically-$4$-edge-connected, $H$ is either a single edge,
or is a $2$-connected graph embedded in a closed disk $D'$ bounded by a
cycle $C'$ through distinct vertices $x_1$, $x_2$, $w_4'$, $w_6'$ in
that order (and possibly containing other vertices).
 If $H$ is a single edge, then either
 $x_1 = x_2$, $w_4' = w_6'$, $x_1 w_4' \in E(G)$, and there is a
noncontractible simple closed curve through $f_3$, $d_{3,2}$ and
$d_{7,1}$ that intersects $G$ at only $3$ points;
 or $x_1 = w_6'$, $x_2 = w_4'$, $x_1 w_4' \in E(G)$, and there is a
noncontractible simple closed curve through $f_1$, $d_{1,2}$ and
$d_{5,1}$ that intersects $G$ at only $3$ points.
 In either case we have a contradiction, so $H$ is $2$-connected and we
have $D'$ and $C'$ as described above.
 Let $B_1 = x_1 C' x_2 = x_2 (\bdy d_{1,2}) x_1$ and $B_3 = w_4' C' w_6'
= w_6' (\bdy d_{5,1}) w_4'$.  Then for every $B_1 B_3$-face chain $(B_1,
g_1, g_2, \ldots, g_{l-1}, B_3)$ of length $l$ through $D'$, $(f_1,
d_{1,2}, g_1, g_2, \ldots, g_{l-1}, f_{5,1})$ is a noncontractible face
ring, which must have even length, so that $l$ is always even. 
Therefore, by Corollary \ref{Z1} there is a $4$-cycle face contained in
$D'$, a contradiction.

 Since every possibility leads to a contradiction, our original
assumption must be wrong, and $\Psi$ does have either a $4$-cycle face
or a noncontractible elementary face ring of odd length.
 \end{proof}

 Now we prove the main result, which we restate.

 \begin{theoremMainOSE}
 \MainOSEtext
 \end{theoremMainOSE}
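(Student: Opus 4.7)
The plan is to proceed by contradiction via a minimum counterexample, and then to chain together the three ingredients assembled in the paper: the reductions of Section \ref{Redn}, the structural result of Theorem \ref{Structure}, and the surgery of Theorem \ref{old4.2}. Suppose some $2$-connected projective-planar cubic graph has no orientable closed $2$-cell embedding, and let $G$ be such a graph with the fewest edges. By Corollary \ref{minproperties2}, I may assume $G$ is simple, cyclically-$4$-edge-connected, and has no $3$- or $4$-cycle; in particular $G$ is $3$-connected.

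Next I would produce an appropriate projective-planar embedding to feed into Theorem \ref{Structure}. If $G$ happens to be planar, then any spherical embedding is automatically an orientable closed $2$-cell embedding, contradicting our choice of $G$; so $G$ is genuinely projective-planar and admits an embedding $\Psi$ in the projective plane. I claim we may take $\rho(\Psi) \ge 2$. Otherwise there is a noncontractible simple closed curve meeting $G$ at a single point, and cutting along this curve and capping the resulting boundary yields a planar embedding of $G$, contradicting non-planarity. Hence $\Psi$ is $2$-representative, and by the Robertson--Vitray characterization cited in Section \ref{DefNot} it is in fact a closed $2$-cell embedding.

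Now I apply Theorem \ref{Structure} to the cyclically-$4$-edge-connected cubic graph $G$ and its $2$-representative projective-planar embedding $\Psi$: the conclusion is that $\Psi$ has either a $4$-cycle face or a noncontractible elementary face ring of odd length. The first alternative is excluded because $G$ has no $4$-cycle at all, so $\Psi$ must contain a noncontractible elementary face ring of odd length $\ell \ge 3$. Theorem \ref{old4.2} then produces a closed $2$-cell embedding of $G$ in some orientable surface, contradicting the choice of $G$. This completes the proof.

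All of the serious work has already been done; the argument here is essentially a case-free deduction. The only point that requires direct attention, rather than citation, is verifying that a minimum counterexample may be equipped with a $2$-representative projective-planar embedding, and this follows from the simple observation above that a representativity-$1$ projective-planar embedding cuts down to a planar one. There is no residual obstacle of independent difficulty.
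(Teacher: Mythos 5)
Your proposal is correct and follows essentially the same route as the paper's own proof: minimal counterexample, Corollary \ref{minproperties2}, Theorem \ref{Structure} (with the $4$-cycle alternative excluded), and then Theorem \ref{old4.2}. The only difference is cosmetic: where you sketch the cut-and-cap argument showing a representativity-$\le 1$ projective-planar embedding forces planarity (a step that needs slight care since the intersection point may be a vertex or an interior point of an edge), the paper simply cites Negami and Robertson--Vitray for this standard fact.
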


 \begin{proof}
 Suppose $G$ is a $2$-connected cubic graph with a projective-planar
embedding $\Psi$ but with no orientable closed $2$-cell embedding and
that, subject to these conditions, $G$ has a minimum number of vertices.
 By Corollary \ref{minproperties2}, $G$ is simple,
cyclically-$4$-edge-connected (hence $3$-connected), and has no $3$- or
$4$-cycles.

 If $\rho(\Psi) \le 1$ then $G$ can be embedded in the plane
\cite{N,RV}, and hence has a spherical closed $2$-cell embedding.
 Otherwise, by Theorem \ref{Structure}, since $G$ has no $4$-cycle,
$\Psi$ must have a noncontractible elementary face ring of odd length. 
 Then by Theorem \ref{old4.2}, $G$ has an orientable closed $2$-cell
embedding, as required.
 \end{proof}

 We would like to strengthen Theorem \ref{MainOSE} to say that we have a
$k$-face-colorable orientable closed $2$-cell embedding for some fixed
small $k$.  Then we could improve Corollary \ref{MainOCDC} to say that we
have an orientable $k$-cycle double cover (the cycles can be colored
using at most $k$ colors so that each edge is contained in two cycles of
different colors).  However, this seems difficult.  The obstacle is our
$4$-cycle reduction: we see no obvious way to avoid increasing the
number of face colors needed when $f_1 \ne g_2$ and $f_2 \ne g_1$ in the
proof of Lemma \ref{no4cycle}.

 \section*{Acknowledgment}

 The authors thank John Ratcliffe for useful discussions regarding the
surgery operation discussed in Lemma \ref{OriSet1}.

 \end{document}